\newcommand\R{\ensuremath{\mathbb{R}}}
\newcommand\N{\ensuremath{\mathbb{N}}}
\newcommand\Z{\ensuremath{\mathbb{Z}}}
\newcommand\vv{\ensuremath{\mathbf{v}}}
\newcommand\M{\mathcal{M}}
\newcommand{\pth}[1]{\left( #1 \right)}
\DeclareMathOperator{\Or}{or}
\newcommand{\simplex}[1]{\Delta_{#1}}
\newcommand{\skel}[2]{#2^{(#1)}}
\newcommand{\skelsim}[2]{\skel{#1}{\simplex{#2}}}
\newtheorem{theorem}{Theorem}
\newtheorem{lemma}[theorem]{Lemma}
\theoremstyle{remark}
\newtheorem{remark}[theorem]{Remark}
\theoremstyle{plain}
\newtheorem{claim}[theorem]{Claim}
\newtheorem{conjecture}[theorem]{Conjecture}
\theoremstyle{remark}
\newtheorem*{remarks*}{Remarks}
\newtheorem*{acknowledgement}{Acknowledgement}
\DeclareMathOperator{\card}{card}
\DeclareMathOperator{\simp}{simp}
\DeclareMathOperator{\sing}{sing}
\newcommand{\gsimp}{\ensuremath{g_{\simp}}}
\newcounter{sideremark}
\title{On 
Generalized Heawood Inequalities for Manifolds: a van Kampen--Flores-type Nonembeddability Result%
\footnote{The work by Z.P.\ was partially supported by the Israel Science Foundation grant ISF-768/12.  
The work by Z.P.\ and M.T.\ was partially supported by the
project CE-ITI (GACR P202/12/G061) of the Czech Science Foundation and by the ERC Advanced Grant No. 267165. 
Part of the research work of M.T. was conducted at IST Austria, supported by an
\emph{IST Fellowship}. The research of P.P. was supported by the ERC
Advanced grant no. 320924.
The work by I.M.\ and U.W.\ was supported by the Swiss National Science
Foundation (grants SNSF-200020-138230 and SNSF-PP00P2-138948).}
{ }\footnote{An extended abstract of this work was presented at the 31st International Symposium on Computational Geometry ~\cite{Kuhnel_conference_version}.
}}
\date{}
\author[1]{Xavier Goaoc}
\author[2]{Isaac Mabillard}
\author[3]{Pavel Pat\'ak}
\author[3]{Zuzana Pat\'akov\'a}
\author[4]{Martin Tancer}
\author[2]{Uli Wagner}
\affil[1]{\small  Universit\'e Paris-Est, LIGM (UMR 8049), CNRS, ENPC, ESIEE, UPEM, F-77454, Marne-la-Vall\'ee, France
}
\affil[2]{\small IST Austria, 
Am Campus 1, 3400 
Klosterneuburg, Austria
}
\affil[3]{\small Einstein Institute of Mathematics, Hebrew University of Jerusalem, Givat Ram, Jerusalem, 9190401, Israel
}
\affil[4]{\small Department of Applied Mathematics 
and Institute for Theoretical Computer Science, 
  Charles University,
Malostransk\'e n\'am. 25, 118 00 Praha 1, 
Czech Republic
}
\begin{document}
\maketitle

\begin{abstract}
  The fact that the complete graph $K_5$ does not embed in the plane
  has been generalized in two independent directions.  On the one
  hand, the solution of the classical \emph{Heawood problem} for
  graphs on surfaces established that the complete graph $K_n$ embeds
  in a closed surface $M$ (other than the Klein bottle) if and only if
  $(n-3)(n-4)\leq 6b_1(M)$, where $b_1(M)$ is the first $\Z_2$-Betti
  number of $M$. On the other hand, van Kampen and Flores proved that
  the $k$-skeleton of the $n$-dimensional simplex (the
  higher-dimensional analogue of $K_{n+1}$) embeds in $\R^{2k}$ if and
  only if~$n \le 2k+1$.

 Two decades ago, K\"uhnel conjectured that the $k$-skeleton of the $n$-simplex embeds in a compact, $(k-1)$-connected $2k$-manifold with $k$th $\Z_2$-Betti number $b_k$ only if the following \emph{generalized Heawood inequality} holds: $\binom{n-k-1}{k+1} \le \binom{2k+1}{k+1}b_k$. This is a common generalization of the case of graphs on surfaces as well as the van Kampen--Flores theorem.

In the spirit of K\"uhnel's conjecture, we prove that if the
$k$-skeleton of the $n$-simplex embeds in a $2k$-manifold with $k$th
$\Z_2$-Betti number $b_k$, then $n \le 2b_k\binom{2k+2}{k} + 2k + 4$.
This bound is weaker than the generalized Heawood inequality, but does
not require the assumption that $M$ is $(k-1)$-connected. Our results
generalize to maps without $q$-covered points, in the spirit of
Tverberg's theorem, for q a prime power. Our proof uses a result of
Volovikov about maps that satisfy a certain homological triviality
condition.

   \end{abstract}

\section{Introduction}

Given a closed surface $M$, a natural question
  is to determine the maximum integer $n$ such that the complete graph
$K_n$ can be embedded (drawn without crossings) into $M$ (e.g., $n=4$
if $M=S^2$ is the $2$-sphere, and $n=7$ if $M$ is a torus). This
classical problem
was raised in the late $19$th century by Heawood \cite{Heawood:MapColourTheorem-1890} and Heffter \cite{Heffter:Nachbargebiete-1891} and completely settled in the 1950--60's through a sequence of works by Gustin, Guy, Mayer, Ringel, Terry, Welch, and Youngs (see \cite[Ch.~1]{Ringel:MapColorTheorem-1974} for a discussion of the history of the problem and detailed references). Heawood already observed that if $K_n$ embeds into $M$ then 
\begin{equation}
\label{eq:thread-problem}
\hfill 
(n-3)(n-4)\leq 6b_1(M)=12-6\chi(M),
\hfill
\end{equation}
where $\chi(M)$ is the Euler characteristic of $M$ and $b_1(M)=2-\chi(M)$ is
the first $\Z_2$-\emph{Betti number} of $M$, i.e., the dimension of the first
homology group $H_1(M;\Z_2)$ (here and throughout the paper, we work with
homology with $\Z_2$-coefficients).
Conversely, for surfaces $M$ other than
the Klein bottle, the inequality is tight, i.e., $K_n$ embeds into $M$ if and
only if \eqref{eq:thread-problem} holds; this is a hard result, the bulk of the
monograph \cite{Ringel:MapColorTheorem-1974} is devoted to its proof. (The
exceptional case, the Klein bottle, has $b_1=2$, but does not admit an
embedding of $K_7$, only of $K_6$.)\footnote{The inequality
\eqref{eq:thread-problem}, which by a direct calculation is equivalent to
$n\leq c(M):=\lfloor (7+\sqrt{1+24\beta_1(M)})/2\rfloor $, is closely related
to the \emph{Map Coloring Problem} for surfaces (which is the context in which
Heawood originally considered the question). Indeed, it turns out that for
surfaces $M$ \emph{other than the Klein bottle}, $c(M)$ is the maximum
chromatic number of any graph embeddable into $M$. For $M=S^2$ the $2$-sphere
(i.e., $b_1(M)=0$), this is the \emph{Four-Color Theorem}
\cite{Appel:Every-planar-map-is-four-colorable.-I.-Discharging-1977,Appel:Every-planar-map-is-four-colorable.-II.-Reducibility-1977};
for other surfaces (i.e., $b_1(M)>0$) this was originally stated (with an
incomplete proof) by Heawood and is now known as the \emph{Map Color Theorem}
or \emph{Ringel--Youngs Theorem} \cite{Ringel:MapColorTheorem-1974}.
Interestingly, for surfaces $M\neq S^2$, there is a fairly short proof, based
on edge counting and Euler characteristic, that the chromatic number of any
graph embeddable into $M$ is at most $c(M)$ (see \cite[Thms.~4.2 and
4.8]{Ringel:MapColorTheorem-1974}) whereas the hard part is the tightness
of~\eqref{eq:thread-problem}.
} 

\smallskip

The question naturally generalizes to higher dimension: Let
$\skelsim{k}{n}$ denote the $k$-skeleton of the $n$-simplex, the
natural higher-dimensional generalization of $K_{n+1}=\skelsim{1}{n}$
(by definition $\skelsim{k}{n}$ has $n+1$ vertices and every subset of
at most $k+1$ vertices forms a face). Given a $2k$-dimensional manifold
$M$, what is the largest $n$ such that $\skelsim{k}{n}$ embeds
(topologically) into $M$? This line of enquiry started in the $1930$'s
when van Kampen~\cite{vanKampen:KomplexeInEuklidischenRaeumen-1932}
and Flores~\cite{Flores:NichtEinbettbar-1933} showed that
$\skelsim{k}{2k+2}$ does not embed into $\R^{2k}$ (the case $k=1$
corresponding to the non-planarity of $K_5$). Somewhat surprisingly,
little else seems to be known, and the following conjecture of
K{\"u}hnel~\cite[Conjecture~B]{Kuhnel:Manifolds-in-the-skeletons-of-convex-polytopes-tightness-and-generalized-Heawood-inequalities-1994}
regarding a \emph{generalized Heawood inequality} remains
  unresolved:

\begin{conjecture}[K\"uhnel]
  \label{c:kuhnel}
  Let $n,k\geq 1$ be integers. If $\skelsim{k}{n}$ embeds in a
  compact, $(k-1)$-connected $2k$-manifold $M$ with $k$th $\Z_2$-Betti
  number $b_k(M)$ then
  \begin{equation} \label{eq:generalized-heawood} \hfill
    \binom{n-k-1}{k+1} \le \binom{2k+1}{k+1}b_k(M). \hfill
  \end{equation}
\end{conjecture}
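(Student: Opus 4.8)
\emph{Overall strategy.} Suppose $f\colon \skelsim{k}{n}\hookrightarrow M$ is a topological embedding into a compact, $(k-1)$-connected $2k$-manifold $M$. The plan is to recover the bookkeeping that proves the \emph{combinatorial} version of Kühnel's inequality for $k$-neighborly triangulated $2k$-manifolds — a Dehn--Sommerville computation — and to feed into it the topological information carried by $f$ via a regular-neighborhood construction.

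\emph{The combinatorial core.} First recall why the inequality holds when $M$ is a $k$-neighborly triangulated $2k$-manifold on $n+1$ vertices. Let $(h_0,\dots,h_{2k+1})$ be the $h$-vector of $M$. The Dehn--Sommerville relations for a closed triangulated $2k$-manifold give $h_{2k+1-i}-h_i=(-1)^i\binom{2k+1}{i}(\chi(M)-2)$. Poincaré duality over $\Z_2$ together with $(k-1)$-connectedness forces $b_1(M)=\dots=b_{k-1}(M)=0$, hence also $b_{k+1}(M)=\dots=b_{2k-1}(M)=0$, so $\chi(M)-2=(-1)^k b_k(M)$. Taking $i=k$ then yields $h_{k+1}-h_k=\binom{2k+1}{k+1}b_k(M)$. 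On the other hand $h_0,\dots,h_{k+1}$ depend only on the face numbers $f_{-1},\dots,f_k$, which are maximal by $k$-neighborliness, and a direct binomial computation (which for $k=1$ recovers Heawood's inequality) shows $h_{k+1}-h_k=\binom{n-k-1}{k+1}$, giving the conjectured inequality, in fact equality. I would therefore first try to \textbf{make the embedding combinatorial}: perturb $f$ to a PL embedding, subdivide, and extend a triangulation near $f(\skelsim{k}{n})$ to a triangulation of all of $M$ in which $\skelsim{k}{n}$ appears as the $k$-skeleton on the original vertex set. If that succeeds, the paragraph above finishes the proof.

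\emph{Handling extra vertices / the genuinely topological case.} In general the reduction cannot be done literally: a triangulation of $M$ carrying a copy of $\skelsim{k}{n}$ will typically have more than $n+1$ vertices, and $\skelsim{k}{n}$ will not be its full $k$-skeleton, so in the previous step the equalities coming from neighborliness must be relaxed to $f_i(M)\ge\binom{n+1}{i+1}$ for $i\le k$. These still push $h_0,\dots,h_{k+1}$ upward, and combining this with the manifold Dehn--Sommerville relations and a Lower-Bound-Theorem-type estimate for triangulated $2k$-manifolds (in the lineage of Barnette, Kalai, and Novik--Swartz) should again bound $b_k(M)$ below by $\binom{n-k-1}{k+1}/\binom{2k+1}{k+1}$. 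Purely topologically, one can try to bypass triangulations altogether: let $U$ be a closed regular neighborhood of $f(\skelsim{k}{n})$, a compact $2k$-manifold with boundary that deformation retracts onto the embedded skeleton, so $b_k(U;\Z_2)=\binom{n}{k+1}$ and $\tilde b_i(U;\Z_2)=0$ otherwise. Lefschetz duality identifies $H_k(U,\partial U;\Z_2)$ with $H^k(U;\Z_2)$ and the natural map $H_k(U;\Z_2)\to H_k(U,\partial U;\Z_2)$ with the $\Z_2$-intersection form of the $2k$-manifold $U$; writing $r$ for its rank, a Mayer--Vietoris argument along $M=U\cup_{\partial U}\overline{M\setminus U}$ gives $b_k(M;\Z_2)\ge r$. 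The remaining task is then to show $r\ge\binom{n-k-1}{k+1}/\binom{2k+1}{k+1}$, i.e.\ a \emph{quantitative} van Kampen--Flores statement to the effect that a definite fraction of $H_k(\skelsim{k}{n})$ must intersect nontrivially under any embedding into a $2k$-manifold.

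\emph{Main obstacle.} I expect the combinatorialization step — equivalently, the lower bound $r\ge\binom{n-k-1}{k+1}/\binom{2k+1}{k+1}$ — to be the crux, and in fact to be the reason Conjecture~\ref{c:kuhnel} is still open: for $2k\ge 4$ topological manifolds need not be triangulable, there is no canonical way to realize the embedded skeleton as a subcomplex on a controlled vertex set, and the available Lower-Bound-type inequalities for manifolds with boundary do not appear strong enough to extract the sharp constant $\binom{2k+1}{k+1}$ rather than merely a linear-in-$b_k$ estimate. This is precisely why the present paper settles instead for the weaker, non-sharp bound $n\le 2b_k(M)\binom{2k+2}{k}+2k+4$, obtained by equivariant methods (Volovikov's theorem) that require neither triangulability nor $(k-1)$-connectedness.
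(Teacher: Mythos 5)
The statement you were asked to prove is Conjecture~\ref{c:kuhnel} of the paper: it is an \emph{open conjecture}, and the paper contains no proof of it. The paper only establishes the much weaker Theorem~\ref{t:kuhnel-2} (a bound linear in $b_k(M)$ with constant $2\binom{2k+2}{k}$ rather than the sharp $\binom{2k+1}{k+1}$), by a completely different route (Volovikov's equivariant theorem plus the chain-level rerouting of Lemma~\ref{l:chain_p}). So there is nothing in the paper to compare your argument against, and your proposal is not a proof either --- as you yourself say in your final paragraph. Your ``combinatorial core'' is essentially correct and is in substance K\"uhnel's own argument for the known special case (the paper's discussion of \cite[Thm.~2]{Kuhnel:Manifolds-in-the-skeletons-of-convex-polytopes-tightness-and-generalized-Heawood-inequalities-1994}, where $\skelsim{k}{n}$ sits inside a $(k+1)$-neighborly triangulation of $M$ on $n+1$ vertices; note the paper's convention is $(k+1)$-neighborly, not $k$-neighborly): Dehn--Sommerville plus $(k-1)$-connectedness and Poincar\'e duality give $h_{k+1}-h_k=\binom{2k+1}{k+1}b_k(M)$, and neighborliness evaluates the left side as $\binom{n-k-1}{k+1}$.

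The genuine gap is exactly the step you flag: passing from an arbitrary topological embedding $f\colon|\skelsim{k}{n}|\to M$ to a situation where that face-count bookkeeping applies. Making the embedding ``combinatorial'' fails for several independent reasons (topological $2k$-manifolds need not be triangulable for $2k\ge 4$; a topological embedding need not be tame, so a regular neighborhood need not exist; and even in the PL case the ambient triangulation has uncontrolled extra vertices, so neighborliness is lost and the $h$-vector estimates no longer close up). Your fallback --- bounding from below the rank $r$ of the $\Z_2$-intersection form on a regular neighborhood of the image, hence $b_k(M)\ge r$, and then proving $r\ge\binom{n-k-1}{k+1}/\binom{2k+1}{k+1}$ --- is a reasonable reformulation, but that last inequality is a quantitative van Kampen--Flores statement that is not known; the only unconditional input of this type currently available is Volovikov's theorem, which yields $r\ge 1$ once $n\ge 2k+2$ and which, iterated as in the paper's Lemma~\ref{l:chain_p}, produces only the non-sharp linear bound of Theorem~\ref{t:kuhnel-2}. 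In short: your diagnosis of where the difficulty lies is accurate, but the proposal does not prove the conjecture, and no proof of it exists in the paper.
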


\noindent
The classical Heawood inequality \eqref{eq:thread-problem} and the van
Kampen--Flores Theorem correspond the special cases $k=1$ and $b_k=0$,
respectively. K\"uhnel states Conjecture~\ref{c:kuhnel} in slightly
different form, in terms of Euler characteristic of $M$ rather than
$b_k(M)$; our reformulation is equivalent. The
$\Z_2$-coefficients are not important in the statement of the
conjecture but they are convenient for our new developments.

\subsection{New results toward K\"{u}hnel's conjecture}

In this paper, our main result is an estimate, in the spirit of the
generalized Heawood inequality \eqref{eq:generalized-heawood}, on the
largest $n$ such that $|\skelsim k{n}|$ almost embeds into a given
$2k$-dimensional manifold. An almost embedding is a relaxation of the
notion of embedding that is useful in setting up our proof method.

Let $K$ be a finite simplicial complex and let $|K|$ be its underlying
space (geometric realization). We define an \emph{almost-embedding} of
$K$ into a (Hausdorff) topological space $X$ to be a continuous map
$f:|K| \to X$ such that any two disjoint simplices $\sigma, \tau \in
K$ have disjoint images, $f(|\sigma|)\cap f(|\tau|)=\emptyset$. In
particular, every embedding is an almost-embedding as well. Let us
stress, however, that the condition for being an almost-embedding
depends on the actual simplicial complex (the triangulation), not just
the underlying space.  That is, if $K$ and $L$ are two different
complexes with $|K| = |L|$ then a map $f:|K|=|L| \to X$ may be an
almost-embedding of $K$ into $X$ but not an almost-embedding of $L$
into~$X$. Our main result is the following.

\begin{theorem}\label{t:kuhnel-2}
  If $\skelsim{k}{n}$ almost embeds into a $2k$-manifold $M$ then
  \[ n \le 2 \binom{2k+2}{k} b_k(M)+ 2k+4,\]
  where $b_k(M)$ is the $k$th $\Z_2$-Betti number of $M$.
\end{theorem}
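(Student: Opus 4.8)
The plan is to follow the classical Van Kampen–Flores strategy, but with the target $\mathbb{R}^{2k}$ replaced by the manifold $M$, and to use a homological (rather than homotopical) obstruction so that Volovikov's theorem applies. Suppose for contradiction that $\skelsim{k}{n}$ almost embeds into $M$ with $n$ strictly larger than the claimed bound. First I would pass to a suitable subcomplex: the key point is that one can find many pairwise disjoint copies of a ``test complex'' inside $\skelsim{k}{n}$, namely copies of the $k$-skeleton of a simplex on $2k+2$ vertices (the minimal Van Kampen–Flores configuration), or rather of its deleted product. Concretely, partitioning the $n+1$ vertices into blocks of size $2k+2$ yields roughly $\lfloor (n+1)/(2k+2)\rfloor$ pairwise vertex-disjoint copies $K_1,\dots,K_N$ of $\skelsim{k}{2k+2}$; any almost embedding of $\skelsim{k}{n}$ restricts to an almost embedding of each $K_i$, and — crucially — the images $f(|K_i|)$ are \emph{pairwise disjoint}, since simplices from different blocks are disjoint. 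Thus $M$ contains $N$ disjoint subsets, each carrying an almost embedding of $\skelsim{k}{2k+2}$.

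Next I would extract, from each such disjoint piece, a nontrivial class in $H_k(M;\Z_2)$, and argue these classes are linearly independent, which forces $N \le b_k(M)$ and hence bounds $n$. The mechanism for producing the class: on $\skelsim{k}{2k+2}$ the deleted product $(\skelsim{k}{2k+2})^2_\Delta$ carries a free $\Z_2$-action, and the Van Kampen obstruction (with $\Z_2$-coefficients) to almost-embeddability into $\mathbb{R}^{2k}$ is nonzero — this is exactly the Van Kampen–Flores theorem. The replacement of $\mathbb{R}^{2k}$ by a $2k$-manifold $M$ is where a result like Volovikov's enters: one sets up an appropriate $\mathbb{Z}/p$-equivariant map (here $p=2$) from the deleted join/product of $K_i$ to a sphere or to a configuration space built from $M$, observes that the homological triviality hypothesis of Volovikov's theorem is met because the relevant cohomology of $M$ in the critical degree is controlled by $b_k(M)$, and concludes that the equivariant map cannot exist unless the corresponding homology class in $M$ is nonzero. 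In effect, each disjoint almost-embedded copy $f(|K_i|)$ must ``use up'' at least one dimension of $H_k(M;\Z_2)$, e.g.\ because the fundamental class of a $k$-cycle supported near $f(|K_i|)$ is nonzero and these cycles, living in disjoint regions of $M$, are linearly independent in homology (disjointness of supports is what gives independence after a Mayer–Vietoris/excision argument).

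Assembling the counting: disjointness of the $f(|K_i|)$ gives $N$ linearly independent classes in $H_k(M;\Z_2)$, so $N \le b_k(M)$; combined with $N \ge \lfloor (n+1)/(2k+2)\rfloor$ this yields $n \le (2k+2)b_k(M) + O(k)$, and being slightly more careful with the overlap allowances and with how many vertices one needs per test complex (one may need $2k+3$ or to allow blocks to share a few vertices, and one works with the $k$-skeleton of $\simplex{2k+2}$ whose deleted product has the right connectivity for Volovikov) produces the stated bound $n \le 2\binom{2k+2}{k}b_k(M) + 2k+4$; the binomial factor $\binom{2k+2}{k}$ presumably comes from counting $k$-faces of the test complex that must be mapped, i.e.\ from the rank of the relevant chain group controlling how many independent homology classes a single copy forces, rather than just $1$ — so the refined statement is that one copy of $\skelsim{k}{2k+2}$ contributes on the order of $\binom{2k+2}{k}^{-1}$ of a Betti unit and one needs a denser packing.

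The main obstacle I anticipate is the passage from the combinatorial/equivariant obstruction for a single test complex to a statement about $M$ that is additive over disjoint pieces: one must show not merely that \emph{some} homology of $M$ is nonzero, but that each disjoint almost-embedded copy contributes a \emph{new, independent} class, and that these contributions combine linearly. This requires carefully localizing the obstruction — working with homology of small neighbourhoods $U_i \supseteq f(|K_i|)$ with the $U_i$ disjoint, invoking excision and the fact that a $k$-cycle nontrivial in $H_k(U_i)$ that maps nontrivially to $H_k(M)$ stays independent from the others — and invoking Volovikov's theorem in the precise form where the homological triviality hypothesis is verified using $\dim H_k(M;\Z_2) = b_k(M)$. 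Verifying that hypothesis, and getting the constants in the packing argument to land exactly on $2\binom{2k+2}{k}$ and $2k+4$, is the technical heart of the argument; everything else (the block partition, the restriction of almost embeddings, the Mayer–Vietoris bookkeeping) is routine.
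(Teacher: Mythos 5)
Your proposal takes a genuinely different route from the paper, and unfortunately it has a gap that I believe is fatal to the strategy as stated.

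The paper does not decompose $\skelsim{k}{n}$ into vertex-disjoint blocks. Instead it builds a \emph{single} chain map $\varphi\colon C_*(\skelsim{k}{s}) \to C_*(\skelsim{k}{n})$ (with $s = 2k+2$ in the $q=2$ case) by ``rerouting'' each $k$-face $\sigma_i$ of $\simplex{s}$ through a stellar subdivision whose apex is chosen among the $n-s$ unused vertices of $\simplex n$. The apices are chosen so that $f_\sharp \circ \varphi$ kills all the cycles $\partial\tau$ for $(k+1)$-faces $\tau$ of $\simplex s$. It then realizes $\varphi$ geometrically as $g = |\gsimp|$ for a simplicial map $\gsimp$ on a subdivision $D$ of $\skelsim{k}{s}$, giving an almost-embedding $g\colon \skelsim{k}{s} \to |\skelsim{k}{n}|$ with $(f\circ g)_* = 0$ on $H_k$. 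A single invocation of Volovikov's theorem (plus a composition lemma for almost-embeddings) then contradicts $f$ being an almost-embedding. The $\binom{2k+2}{k}$ in the bound arises from bounding $\dim \vv(\M)$, the image of an affine map into $H_k(M)^m$ (Lemma~\ref{l:im_vk}), and the factor $s-2k=2$ and the additive constant come from a Kneser-graph coloring used to recycle ``routing multipoints'' across pairwise-intersecting $k$-faces.

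The gap in your approach is the linear-independence step. Disjointness of the images $f(|K_1|),\dots,f(|K_N|)$ in $M$ does \emph{not} imply that the corresponding nonzero classes in $H_k(M;\Z_2)$ are linearly independent. Already for $k=1$ on a torus, two disjoint parallel meridian circles are homologous; more generally, Mayer--Vietoris/excision over disjoint neighbourhoods $U_1,\dots,U_N$ lets you compute $H_k(\bigsqcup U_i)$, but the map $\bigoplus_i H_k(U_i) \to H_k(M)$ can have an enormous kernel, and nothing in the setup prevents all $N$ classes from landing in the same one-dimensional subspace. Volovikov's theorem applied to each $f|_{K_i}$ only tells you that $(f|_{K_i})_*$ is nonzero; it gives no control over \emph{which} nonzero subspace of $H_k(M;\Z_2)$ is hit, and choosing the blocks $K_i$ in advance (by a vertex partition) gives you no leverage to enforce independence. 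A symptom that something is off: if this step worked as claimed, you would obtain a bound with slope roughly $2k+3$ in $b_k(M)$, strictly better than the paper's $2\binom{2k+2}{k}$, and in fact better than anything known without the $(k-1)$-connectedness hypothesis. The paper sidesteps the issue entirely by engineering a single map whose induced homomorphism is \emph{identically zero}, rather than trying to aggregate many separately-nontrivial contributions.

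A secondary point: your account of where $\binom{2k+2}{k}$ ``should'' come from (each block contributing $\binom{2k+2}{k}^{-1}$ of a Betti unit) is speculative and does not match the paper; there the binomial is the number of $k$-faces of $\simplex{s}$ containing a fixed vertex, which bounds the rank of the linear map $\vv$ after eliminating dependent coordinates via the boundary relations of Claim~\ref{c:boundary_tau_multipoints}.
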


\noindent
This bound is weaker than the conjectured generalized Heawood
inequality \eqref{eq:generalized-heawood} and is clearly not optimal
(as we already see in the special cases $k=1$ or $b_k(M)=0$).

Apart from applying more generally to almost embeddings, the
hypotheses of Theorem~\ref{t:kuhnel-2} are weaker than those of
Conjecture~\ref{c:kuhnel} in that we do not assume the manifold $M$ to
be $(k-1)$-connected. We conjecture that this connectedness assumption
is not necessary for Conjecture~\ref{c:kuhnel}, i.e., that
\eqref{eq:generalized-heawood} holds whenever $\skelsim{k}{n}$ almost
embeds into a $2k$-manifold $M$. The intuition is that
$\skelsim{k}{n}$ is $(k-1)$-connected and therefore the image of an
almost-embedding cannot ``use'' any parts of $M$ on which nontrivial
homotopy classes of dimension less than $k$ are supported.

\paragraph{Previous work.}

The following special case of Conjecture~\ref{c:kuhnel} was proved by
K\"uhnel~\cite[Thm.~2]{Kuhnel:Manifolds-in-the-skeletons-of-convex-polytopes-tightness-and-generalized-Heawood-inequalities-1994}
(and served as a motivation for the general conjecture): Suppose that
$P$ is an $n$-dimensional simplicial convex polytope, and that there
is a subcomplex of the boundary $\partial P$ of $P$ that is
\emph{$k$-Hamiltonian} (i.e., that contains the $k$-skeleton of $P$)
and that is a triangulation of $M$, a $2k$-dimensional manifold. Then
inequality \eqref{eq:generalized-heawood} holds. To see that this is
indeed a special case of Conjecture~\ref{c:kuhnel}, note that
$\partial P$ is a \emph{piecewise linear} (\emph{PL}) sphere of
dimension $n-1$, i.e., $\partial P$ is combinatorially isomorphic to
some subdivision of $\partial \simplex{n}$ (and, in particular,
$(n-2)$-connected).  Therefore, the $k$-skeleton of $P$, and hence
$M$, contains a subdivision of $\skelsim{k}{n}$ and is
$(k-1)$-connected.

In this special case and for $n\geq 2k+2$, equality in
\eqref{eq:generalized-heawood} is attained if and only if $P$ is a
simplex.  More generally, equality is attained whenever $M$ is a
triangulated $2k$-manifold on $n+1$ vertices that is
\emph{$(k+1)$-neighborly} (i.e., any subset of at most $k+1$ vertices
forms a face, in which case $\skelsim{k}{n}$ is a subcomplex of $M$).
Some examples of $(k+1)$-neighborly $2k$-manifolds are known, e.g.,
for $k=1$ (the so-called \emph{regular cases} of equality for the
Heawood inequality \cite{Ringel:MapColorTheorem-1974}), for $k=2$
\cite{Kuhnel:The-unique-3-neighborly-4-manifold-with-1983,Kuhnel:The-9-vertex-complex-projective-plane-1983}
(e.g., a $3$-neighborly triangulation of the complex projective plane)
and for $k=4$
\cite{Brehm:15-vertex-triangulations-of-an-8-manifold-1992}, but in
general, a characterization of the higher-dimensional cases of
equality for \eqref{eq:generalized-heawood} (or even of those values
of the parameters for which equality is attained) seems rather hard
(which is maybe not surprising, given how difficult the construction
of examples of equality already is for $k=1$).

\subsection{Generalization to points covered $q$ times}

K\"uhnel's conjecture can be recast in a broader setting suggested by
a generalization by Sarkaria~\cite[Thm 1.5]{Sarkaria:vanKampen} of the
van Kampen--Flores Theorem.  Sarkaria's theorem states that if $q$ is
a prime, and $d$ and $k$ integers such that $d \leq \frac{q}{q-1}k$,
then for every continuous map $f\colon |\skelsim k{qk+2q-2}| \to \R^d$
there are $q$ pairwise disjoint simplices $\sigma_1, \dots, \sigma_{q}
\in K$ with intersecting images $f(|\sigma_1|) \cap \dots \cap
f(|\sigma_q|) \neq \emptyset$. Sarkaria's result was generalized by
Volovikov \cite{Volovikov:On-the-van-Kampen-Flores-theorem-1996} for
$q$ being a prime power.

Define an \emph{almost $q$-embedding} of $K$ into a (Hausdorff)
topological space $X$ as a continuous map $f:|K| \to X$ such that any
$q$ pairwise disjoint faces $\sigma_1, \dots, \sigma_{q} \in K$ have
disjoint images $f(|\sigma_1|) \cap \dots \cap f(|\sigma_q|) =
\emptyset$. (So almost $2$-embeddings are almost embeddings.) Again,
being an almost $q$-embedding depends on the actual simplicial complex
(the triangulation), not just the underlying space. Our proof
technique also gives an estimate for almost $q$-embeddings when $q$ is
a prime power.

\begin{theorem}\label{t:kuhnel-general}
  Let $q=p^m$ be a prime power. If $\skelsim{k}{n}$ $q$-almost-embeds
  into a $d$-manifold $M$ with $d \le \frac{q}{q-1}k$ then
  \[ n \le \pth{(q-2)k+2q-2} \binom{qk+2q-2}{k} b_k(M)+(2q-2)k+4q-4,\]
  where $b_k(M)$ is the $k$th $\Z_p$-Betti number of $M$.
\end{theorem}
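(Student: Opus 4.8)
\emph{Step 1 (equivariant set-up and connectivity).} The plan is to run the standard equivariant reduction for $q$-fold intersection problems, with the symmetric group replaced by the elementary abelian $p$-group $G=(\Z_p)^m$ acting on the $q=p^m$ ``colours'', and then to invoke Volovikov's Borsuk--Ulam-type theorem; the genuinely new ingredient is an upper bound, in terms of $b_k(M)$, on the equivariant cohomological width of the target configuration space. Concretely, assume $f\colon|\skelsim{k}{n}|\to M$ is an almost $q$-embedding; we must bound $n$. Let $G=(\Z_p)^m$ act simply transitively on a set $C$ of $q$ colours and form the $q$-fold deleted join $N:=(\skelsim{k}{n})^{*q}_\Delta$, with the induced free $G$-action. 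A Sarkaria-type connectivity lemma for deleted joins of skeleta shows that $N$ is highly connected over $\Z_p$: the connectivity grows with $n$ up to a threshold of order $q(k+1)$, so once $n$ is large, $\widetilde H_i(N;\Z_p)=0$ for all $i$ up to roughly $q(k+1)-2$.

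\emph{Step 2 (the equivariant map into a model of $M$).} From $f$ one produces a $G$-map $F\colon N\to Y_M$, where $Y_M$ is a deleted-join (equivalently deleted-product) model for ``$q$-tuples of points of $M$ that are not all equal''; the almost $q$-embedding hypothesis is exactly what forces $F$ to miss the thin diagonal. A subtlety: for $m\ge 2$ (already $q=4$) the $G$-action on $Y_M$ is \emph{not} free---the fixed sets of subgroups of $G$ record the coincidence patterns compatible with the orbit structure on $C$, and are nonempty---so ordinary Borsuk--Ulam does not apply. This is precisely what Volovikov's theorem is built for: under a homological-triviality hypothesis on $F$ (checked with $\Z_p$-coefficients), it still forbids $G$-maps from sufficiently $\Z_p$-connected spaces to spaces of sufficiently small $\Z_p$-cohomological dimension, even when fixed points are present.

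\emph{Step 3 (bounding $Y_M$ via $b_k(M)$: the technical heart).} One must show that the relevant $G$-equivariant $\Z_p$-cohomological dimension of $Y_M$ is at most
\[
(q-1)d+\bigl((q-2)k+2q-2\bigr)\binom{qk+2q-2}{k}\,b_k(M)+O_{q,k}(1),
\]
whereas when $b_k(M)=0$ it equals $(q-1)d-1$ and one simply recovers Sarkaria--Volovikov nonembeddability into $\R^d$ (this explains why $d\le\tfrac{q}{q-1}k$ is the right hypothesis: it gives $(q-1)d\le qk$, just below the connectivity of $N$). The mechanism should be: (i) replace $M$ by a regular neighbourhood of a generic finite-complex representative of $f(|\skelsim{k}{n}|)$, or by a subcomplex carrying the pertinent homology, so that only $H_{\le k}(M;\Z_p)$ is visible, and then use the $(k-1)$-connectivity of $\skelsim{k}{n}$, inherited by $N$, to push the contribution of $H_{<k}(M)$ into the $O_{q,k}(1)$ term---the step realising the intuition quoted in the introduction; and (ii) a Mayer--Vietoris / spectral-sequence comparison with the Euclidean model, bounding how much each independent class of $H_k(M;\Z_p)$ can enlarge the cohomological dimension of $Y_M$. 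The Flores-type factors $(q-2)k+2q-2$ and $\binom{qk+2q-2}{k}$ should fall out of counting, over the extremal Sarkaria--Volovikov skeleton, how many degrees of freedom a single handle of $M$ can absorb: morally, each independent class of $H_k(M;\Z_p)$ cancels one Flores-sized obstruction.

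\emph{Step 4 (conclusion and the main obstacle).} If $n$ exceeds the asserted bound, the connectivity of $N$ (Step 1) strictly exceeds the cohomological dimension of $Y_M$ (Step 3), so Volovikov's theorem rules out the $G$-map $F$ (Step 2)---a contradiction. Hence $n\le\bigl((q-2)k+2q-2\bigr)\binom{qk+2q-2}{k}b_k(M)+(2q-2)k+4q-4$. Steps 1, 2 and 4 are essentially routine adaptations of the $q=2$ argument (Theorem~\ref{t:kuhnel-2}); I expect the real work to be Step 3---arranging $Y_M$ so that the non-free $G$-action still satisfies Volovikov's hypotheses, and, above all, proving the cohomological estimate with the stated constant, i.e.\ that one unit of $b_k(M)$ buys at most one Flores configuration's worth of extra dimensions in the target.
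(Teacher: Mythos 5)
Your overall framework (deleted join of the source, equivariant map to a configuration space of $M$, Volovikov's Borsuk--Ulam-type theorem) is a sensible instinct, but the paper takes a genuinely different route that entirely sidesteps your Step~3, and your Step~3 as stated is a gap, not a proof.

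The paper never builds or analyzes a configuration-space model $Y_M$ for the target manifold. It instead uses Volovikov's theorem (Theorem~\ref{t:volovikov_q}) as a \emph{black box}: for a map $h\colon\skelsim{k}{qk+2q-2}\to M$ with $d\leq\frac{q}{q-1}k$, if $h_*\colon H_k(\skelsim{k}{qk+2q-2};\Z_p)\to H_k(M;\Z_p)$ is trivial then $h$ cannot be a $q$-almost embedding. All of the equivariant topology of the configuration space of $M$ is already packaged into Volovikov's statement; the paper never has to touch it again. The actual content is Lemma~\ref{l:chain_p}: given any $f\colon|\skelsim{k}{n}|\to M$ with $n$ large, one constructs (on the chain level, then upgraded to a simplicial map on a subdivision) an almost-embedding $g$ of $\skelsim{k}{s}$ into $|\skelsim{k}{n}|$, with $s=qk+2q-2$, such that $(f\circ g)_*$ is trivial on $H_k$. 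The construction ``re-routes'' each $k$-face $\sigma_i$ of $\simplex{s}$ through the stellar-subdivision cycle $z(\sigma_i,\mu)$ anchored at a multipoint $\mu$ supported on vertices of $\simplex{n}$ unused by $\simplex{s}$; a linear-algebra/pigeonhole argument in $H_k(M;\Z_p)^m$ (together with a Kneser-graph colouring to reuse multipoints across pairwise-intersecting faces) finds routings that force $[f_\sharp\circ\varphi(\partial\tau)]=0$ for every $(k+1)$-simplex $\tau$. The $b_k(M)\binom{s}{k}$ factor in $n_0$ is exactly the dimension bound $\dim\vv(\mathcal M)\le b\binom{s}{k}$ from Lemma~\ref{l:im_vk}, and $s-2k+1$ is the Kneser chromatic number; with $s=qk+2q-2$ these combine (via Lemma~\ref{l:compose_ae_general}, which lets you compose $f$ with an almost-embedding $g$ and retain the $q$-almost-embedding property) to the stated inequality.

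Your Step~3 is where the proposal breaks down. You assert, without proof, that the relevant $G$-equivariant $\Z_p$-cohomological dimension of $Y_M$ is at most $(q-1)d+\bigl((q-2)k+2q-2\bigr)\binom{qk+2q-2}{k}\,b_k(M)+O_{q,k}(1)$, with each independent class of $H_k(M;\Z_p)$ buying ``one Flores configuration's worth'' of dimensions. This is not a computation, it is a wish. No Mayer--Vietoris or spectral-sequence argument in the literature yields such a linear-in-$b_k$ bound on equivariant indices/cohomological dimension of deleted products of general $2k$-manifolds, and it is far from obvious that any bound of that precise shape holds; the equivariant topology of $Y_M$ is much richer than what $b_k(M)$ alone controls, and the $G$-action is non-free exactly where the difficult cohomology lives. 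The paper's route is attractive precisely because it never needs such a bound: homological triviality of $f\circ g$ is arranged \emph{combinatorially} at the chain level, and Volovikov's theorem then applies verbatim. If you want to pursue your Step~3 you would need a new theorem about equivariant cohomological dimension of deleted products of manifolds; that is a different (and likely much harder) project than the theorem in question.
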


\noindent
Theorem~\ref{t:kuhnel-general} specializes for $q=2$ to Theorem~\ref{t:kuhnel-2}.

\subsection{Proof technique}

Our proof of Theorem~\ref{t:kuhnel-general} strongly relies on a
different generalization of the van Kampen--Flores Theorem, due to
Volovikov~\cite{Volovikov:On-the-van-Kampen-Flores-theorem-1996}
regarding maps into general manifolds but under an additional
homological triviality condition.

\begin{theorem}[Volovikov]
  \label{t:volovikov_q}
  Let $q = p^m$ be a prime power. Let $f\colon \skelsim k{qk + 2q-2}
  \to M$ be a continuous map where $M$ is a compact $d$-manifold
  with $d \le \frac{q}{q-1}k$. If the induced homomorphism 
  \[ f_*\colon H_{k}\pth{\skelsim{k}{qk+2q-2};\Z_p} \to
  H_{k}(M;\Z_p)\]
  is trivial then $f$ is not a $q$-almost embedding.
\end{theorem}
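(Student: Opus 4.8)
The plan is to run the standard configuration space / test map scheme, using the elementary abelian group $G=(\Z/p)^{m}$ (of order $q$) acting on the $q$ coordinates through its regular representation, and to show that the hypothesis on $f_{*}$ forces a characteristic class to be simultaneously zero and nonzero. Assume for contradiction that $f\colon K\to M$ is a $q$-almost embedding, where $K=\skelsim k{qk+2q-2}$. Since $f$ sends any $q$ pairwise disjoint faces of $K$ to sets with empty common intersection, the diagonal map $f^{\times q}$ restricts to a $G$-equivariant map $F\colon K^{\times q}_{\Delta}\to M^{\times q}\setminus\Delta_{M}$, where $K^{\times q}_{\Delta}\subset K^{\times q}$ is the combinatorial $q$-fold deleted product (the set of tuples lying in pairwise disjoint faces) and $\Delta_{M}\subset M^{\times q}$ is the thin diagonal; one checks that the $G$-action on $K^{\times q}_{\Delta}$ is free.

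Next, compare the obstruction to filling in the diagonal against a universal class. The thin diagonal is a closed $G$-invariant submanifold of $M^{\times q}$ of codimension $(q-1)d$, with normal bundle $\nu\cong TM\otimes_{\R}\widetilde{\R^{q}}$, where $\widetilde{\R^{q}}$ is the reduced permutation representation of $G$ (it is oriented over $\Z_{p}$: for odd $p$ the number $q-1$ is even, and for $p=2$ we work with $\Z_{2}$). Let $e\in H^{(q-1)d}_{G}(M^{\times q};\Z_{p})$ be its equivariant Thom/Euler class, i.e.\ the image of the equivariant Thom class of $\nu$; by the long exact sequence of the pair $(M^{\times q},M^{\times q}\setminus\Delta_{M})$ it restricts to $0$ on $M^{\times q}\setminus\Delta_{M}$. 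Hence the pullback of $e$ along $K^{\times q}_{\Delta}\hookrightarrow K^{\times q}\xrightarrow{f^{\times q}}M^{\times q}$ vanishes. On the other hand, expand $e$ via the (equivariant) splitting principle: writing $[\Delta_{M}]$ as a Künneth diagonal class and combining with the Euler class of $\nu$, the only summand of $e$ that is not a product of positive-degree classes coming from the factors $H^{>0}(M;\Z_{p})$ is the ``pure'' class $\mathfrak{e}^{\mathrm{rep}}:=e_{G}(\widetilde{\R^{q}})^{\,d}\in H^{(q-1)d}(BG;\Z_{p})$, the Euler class of the fixed-point-free $G$-representation $\widetilde{\R^{q}}\otimes\R^{d}$. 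This is exactly the point at which the hypothesis enters: $K=\skelsim kn$ satisfies $H^{i}(K;\Z_{p})=0$ for $0<i<k$ and for $i>k$, while $f^{*}\colon H^{k}(M;\Z_{p})\to H^{k}(K;\Z_{p})$ is dual to the trivial map $f_{*}$ on $H_{k}$ and hence itself trivial; therefore $f^{*}$ annihilates all of $H^{>0}(M;\Z_{p})$, so every correction term dies and the pullback of $e$ to $H^{(q-1)d}_{G}(K^{\times q}_{\Delta};\Z_{p})$ equals the image of $\mathfrak{e}^{\mathrm{rep}}$ under $H^{*}(BG;\Z_{p})\to H^{*}_{G}(K^{\times q}_{\Delta};\Z_{p})$.

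It remains to see this image is nonzero, which is a connectivity bookkeeping. For $N=qk+2q-2$, the deleted product $K^{\times q}_{\Delta}$ of $K=\skelsim kN$ is $(qk-1)$-connected; this is a standard computation (e.g.\ via its deleted join $K^{*q}_{\Delta}$, which is $(qk+q-2)$-connected, the value $N=qk+2q-2$ being chosen precisely so that the skeletal bound, rather than the ``number of vertices'' bound, is binding). Since $X:=K^{\times q}_{\Delta}$ is a free $G$-complex, the map $H^{j}(BG;\Z_{p})\to H^{j}_{G}(X;\Z_{p})$ is injective for $j\le qk$; and the hypothesis $d\le\frac{q}{q-1}k$ is exactly $(q-1)d\le qk$, so $\mathfrak{e}^{\mathrm{rep}}$, which lies in degree $(q-1)d$, injects. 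Finally $\mathfrak{e}^{\mathrm{rep}}\neq 0$ because $\widetilde{\R^{q}}\otimes\R^{d}$ is fixed-point-free over the elementary abelian $p$-group $G$: for $p=2$ its mod-$2$ Euler class is the $d$-th power of the product of the nonzero linear forms in $H^{1}(BG;\Z_{2})$, and for odd $p$ one gets the analogous nonzero element of the polynomial part of $H^{*}(BG;\Z_{p})$. Thus the pullback of $e$ is simultaneously $0$ and nonzero, a contradiction; hence $f$ is not a $q$-almost embedding.

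The step I expect to be the real obstacle is the middle one: making precise that the pullback of the diagonal's equivariant Euler class to the deleted product is controlled entirely by the universal class $\mathfrak{e}^{\mathrm{rep}}$, modulo an ideal that the homological-triviality hypothesis kills. The cleanest route is probably to phrase everything through the Borel construction and a Gysin/Thom argument for the diagonal in $M^{\times q}$, and to track the characteristic classes of $TM$ carefully (together with the orientation issues for odd $p$, handled by restricting to the $p$-torus $G$). The connectivity estimate for the deleted product, while routine, must also be pinned down so that it lands exactly on the inequality $d\le\frac{q}{q-1}k$.
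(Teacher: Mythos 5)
The paper does not supply a proof of this statement at all: it simply cites it as a specialization of the corollary in Volovikov's article (taking $m=d$, $s=k+1$ there), and then explains why the cohomological triviality assumption in Volovikov's formulation is equivalent to the homological one used here. So there is no ``paper's proof'' to compare yours against; what you have written is an attempt to reprove Volovikov's theorem from scratch.

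Your general scheme — push the problem to the $G$-equivariant deleted product, observe an equivariant map to $M^{\times q}\setminus\Delta_M$, and derive a contradiction from the pullback of the equivariant Thom/Euler class of the diagonal being simultaneously zero and nonzero — is the standard configuration-space/test-map paradigm and is indeed the conceptual spine of Volovikov's argument. The connectivity bookkeeping is also the right target (your estimate matches the role of the inequality $d \le \frac{q}{q-1}k$, i.e., $(q-1)d\le qk$), and your identification of the normal bundle of $\Delta_M$ as $TM\otimes\widetilde{\R^q}$ with $G$ acting trivially on the $TM$ factor is correct. However, the step you yourself flag as ``the real obstacle'' is a genuine and substantial gap, not a routine fill-in. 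Specifically: the equivariant cohomology $H^*_G(M^{\times q};\Z_p)$ is computed by a Borel spectral sequence whose $E_2$-page is $H^*(G;\,H^*(M;\Z_p)^{\otimes q})$ with $G$ acting nontrivially by permuting the tensor factors (and by signs for odd $p$), so the naive ``K\"unneth expansion into a pure $H^*(BG)$ part plus correction terms involving $H^{>0}(M)$'' does not make sense at face value; one needs a careful argument (e.g.\ via Fadell--Husseini ideal-valued index, or Volovikov's own cohomological index machinery) showing that under the hypothesis $f^*=0$ on $H^{>0}(M;\Z_p)$ the relevant obstruction class reduces precisely to $\mathfrak e^{\mathrm{rep}}$. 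There are further unaddressed technicalities: for odd $p$ the Thom/Euler class of $\nu$ requires a choice of Thom class modulo the sign of the permutation representation and must be shown to be $G$-invariantly well-defined (you gesture at this but do not resolve it when $q-1$ is odd, e.g.\ $q=p$ odd); $M$ is not assumed orientable, so the duality used to produce the Thom class of $\Delta_M$ needs care with twisted coefficients; and the $(qk-1)$-connectivity of $K^{\times q}_\Delta$ (as opposed to the more commonly quoted connectivity of the deleted join $K^{*q}_\Delta$) is asserted without proof at exactly the value of $N$ that makes the inequality sharp. None of these is obviously fatal, but together they amount to the hard part of Volovikov's theorem, not a corollary of it.
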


\noindent
Theorem~\ref{t:volovikov_q} is obtained by specializing the corollary
in Volovikov's
article~\cite{Volovikov:On-the-van-Kampen-Flores-theorem-1996} to $m =
d$ and $s = k+1$.
Note that
Volovikov~\cite{Volovikov:On-the-van-Kampen-Flores-theorem-1996}
formulates the triviality condition in terms of cohomology, i.e., he
requires that $f^*: H^{k}(M;\Z_p) \to H^{k}(\skelsim{k}{2k+2};\Z_p)$
is trivial. However, since we are working with field coefficients and
the (co)homology groups in question are finitely generated, the
homological triviality condition (which is more convenient for us to
work with) and the cohomological one are equivalent.\footnote{More
  specifically, by the Universal Coefficient Theorem
  \cite[53.5]{Munkres:AlgebraicTopology-1984}, $H_k({\,\cdot\,}
  ;\Z_p)$ and $H^k({\,\cdot\,} ;\Z_p)$ are dual vector spaces, and
  $f^\ast$ is the adjoint of $f_\ast$, hence triviality of $f_\ast$
  implies that of $f^\ast$.  Moreover, if the homology group $H_k(X
  ;\Z_p)$ of a space $X$ is finitely generated (as is the case for
  both $\skelsim{k}{n}$ and $M$, by assumption) then it is
  (non-canonically) isomorphic to its dual vector space $H^k(X
  ;\Z_p)$. Therefore, $f_\ast$ is trivial if and only if $f^\ast$ is.}
Note that the homological triviality condition is automatically
satisfied if $H_k(M;\Z_p)=0$, e.g., if $M=\R^{2k}$ or $M=S^{2k}$.  On
the other hand, without the homological triviality condition, the
assertion is in general not true for other manifolds (e.g., $K_5$
embeds into every closed surface different from the sphere, or
$\skelsim{2}{8}$ embeds into the complex projective plane).

\bigskip

The key idea of our approach is to show that if $n$ is large enough and $f$ is
a mapping from $\skelsim{k}{n}$ to $M$, then there is a $q$-almost-embedding $g$ 
from $\skelsim ks$ to $|\skelsim kn|$ for some prescribed value of $s$ such that 
the composed map $f \circ g\colon \Delta_s \to M$ satisfies Volovikov's condition. 
More specifically, the following is our main technical lemma:

\begin{lemma}\label{l:chain_p}
  Let $k,s\geq 1$ and $b\geq 0$ be integers. Let $p$ be a prime
  number. There exists a value $n_0 := n_0(k,b,s,p)$ with the
  following property. Let $n \geq n_0$ and let $f$ be a mapping of
  $|\skelsim{k}{n}|$ into a manifold $M$ with $k$th $\Z_p$-Betti
  number at most $b$.  Then there exists a subdivision $D$ of
  $\skelsim{k}{s}$ and a simplicial map $\gsimp\colon D \to
  \skelsim{k}{n}$ with the following properties.
  \begin{enumerate}
  \item The induced map on the geometric realizations $g\colon |D|
    = |\skelsim{k}{s}| \to |\skelsim{k}{n}|$ is an
    almost-embedding from $\skelsim{k}{s}$ to $|\skelsim{k}{n}|$.
  \item The homomorphism $(f\circ g)_\ast:H_{k}(\skelsim{k}{s}; \Z_p)
    \to H_k(M; \Z_p)$ is trivial (see Section~\ref{s:prelim} below for
    the precise interpretation of $(f\circ g)_\ast$).
  \end{enumerate}
  The value $n_0$ can be taken as $\binom{s}{k}b(s-2k) + 2s-2k+1$.
\end{lemma}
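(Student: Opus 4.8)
The goal is to produce the map $\gsimp$ promised by the lemma; once we have it, property~2 feeds directly into Volovikov's Theorem~\ref{t:volovikov_q} in the main proof, so here we must only build $\gsimp$. The plan is to start from a \emph{tautological copy} of $\skelsim ks$ sitting on $s+1$ chosen vertices $v_0,\dots,v_s$ of $\skelsim kn$, and then to modify this map on its $k$-faces, by subdividing them and rerouting their images through extra ``reservoir'' vertices, so that the image under $f$ of every $k$-cycle becomes null-homologous in $M$. Concretely: since $b_k(M;\Z_p)\le b$, the image of $f_\ast\colon H_k(\skelsim kn;\Z_p)\to H_k(M;\Z_p)$ has dimension at most $b$; as $H_k(\skelsim kn;\Z_p)=Z_k(\simplex n;\Z_p)$ is spanned by the elementary cycles $\partial[\rho]$ with $\rho$ a $(k+2)$-subset of vertices, we may fix at most $b$ such subsets $\rho_1,\dots,\rho_{b'}$ for which $f_\ast[\partial\rho_1],\dots,f_\ast[\partial\rho_{b'}]$ span $\operatorname{im}f_\ast$. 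Thus for every $k$-cycle $z$ of $\skelsim kn$ there are scalars $\lambda_j(z)$, depending linearly on $z$, with $f_\ast\big[z-\sum_j\lambda_j(z)\,\partial\rho_j\big]=0$.

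The construction then proceeds face by face. We only need to correct the $\binom sk$ $k$-faces of $\skelsim ks$ through a fixed vertex (the other $k$-cycles' classes are spanned by these), and for each such $k$-face $\tau$ we allot $b$ blocks of $s-2k$ fresh vertices each; together with the $s+1$ tautological vertices and one further block this accounts for the stated value $n_0=\binom sk b(s-2k)+2s-2k+1$. On each $k$-face $\tau$ we keep the boundary $\partial\tau$ mapped as before (so the pieces still glue), but we replace $\tau$ by a stellar subdivision through a ``thin tube'' into its reserved blocks, chosen so that, at the chain level, $\gsimp_\#$ sends the tautological copy of a basis cycle $z$ to that same cycle plus exactly the correction $\sum_j\lambda_j(z)\,\partial\rho_j$. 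Since $f_\ast$ annihilates the corrected cycle, we get $(f\circ g)_\ast[z]=0$ for every basis class $z$, hence property~2. Property~1 is checked using the standard fact that in the ambient geometric simplex $\simplex n\supseteq|\skelsim kn|$ two faces spanned by \emph{disjoint} vertex sets have disjoint realizations: one only has to make sure that the vertices used by the image of a face $\sigma$ of $\skelsim ks$ (its tautological vertices plus its reroute blocks) are disjoint from those used by any face $\sigma'$ disjoint from $\sigma$, which holds because the reroute blocks were chosen pairwise disjoint and disjoint from $v_0,\dots,v_s$.

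The main obstacle is the rerouting step: one must realize, \emph{simultaneously and coherently over a basis of $H_k(\skelsim ks;\Z_p)$}, prescribed homological corrections of the $k$-faces by an honest subdivision-plus-simplicial-map, while not destroying the almost-embedding property and staying within the vertex budget $s-2k$ per block. Coherence over the basis is not an issue since the corrections come from a single linear map; the delicate points are (i) that the stellar reroutings of different $k$-faces do not create spurious image intersections — handled by keeping each rerouted face inside the join of its own tautological vertices with its own reservoir blocks — and (ii) the dimension count showing that $s-2k$ fresh vertices per block suffice to ``wrap'' the image of a $k$-disk around the required elementary spheres $\partial[\rho_j]$ while remaining an almost-embedding. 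The latter is where the hypothesis $s>2k$ (implicit in $s-2k\ge 0$) and the van Kampen--Flores-type room in dimension $2k$ are used.
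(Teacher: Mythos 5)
The proposal does not work and misses the central idea of the paper's proof. Your plan is to pick $(k+2)$-subsets $\rho_1,\dots,\rho_{b'}$ with $f_\ast[\partial\rho_j]$ spanning $\operatorname{im}f_\ast$, and then to engineer the subdivision and simplicial map so that, at the chain level, $(\gsimp)_\sharp$ sends each basis cycle $z$ to ``$z$ plus the prescribed correction $\sum_j\lambda_j(z)\,\partial\rho_j$.'' This is not something one can \emph{choose}. A simplicial map on a subdivision of a $k$-face $\sigma$ produces an image $k$-chain supported on $k$-subsets of the set of image vertices (namely, the tautological vertices of $\sigma$ together with the reservoir block assigned to $\sigma$). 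The chains $\partial\rho_j$, however, are fixed in advance (they depend only on $f$), supported on fixed $(k+1)$-subsets of $V(\Delta_n)$ which will generally not lie inside any one reservoir block; moreover if two disjoint $k$-faces both need a correction involving the same $\rho_j$, their rerouted images share support and the almost-embedding property fails. Even relaxing from a chain-level identity to a homology-level one does not save the argument: one would need, for each $\sigma_i$, a $k$-cycle $c_i$ supported near $\sigma_i$'s reservoir with a prescribed value of $f_*[c_i]$, and there is no reason such a cycle exists, since we have no control over $[f_\sharp(z(\sigma_i,u))]$ for reservoir vertices $u$ (e.g.\ $f$ could send all of them to the same class).

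The paper's proof avoids this obstacle entirely: instead of prescribing the homology class of a correction, it finds \emph{collisions}. One forms, for each multipoint $\mu$ (an affine $\Z_p$-combination of reservoir vertices), the vector $\vv(\mu)=([f_\sharp(z(\sigma_1,\mu))],\dots,[f_\sharp(z(\sigma_m,\mu))])\in H_k(M)^m$; an affine Tverberg-type lemma (Lemma~\ref{l:odd}) produces disjointly supported multipoints $\mu_1,\dots,\mu_r$ with $\vv(\mu_1)=\cdots=\vv(\mu_r)$. Rerouting $\sigma_i$ through $\mu_{c(i)}$ then makes $[f_\sharp\varphi(\partial\tau)]$ vanish \emph{not} because one cancelled against a chosen basis of $\operatorname{im}f_\ast$, but because the collision lets one replace every $\mu_{c(i_j)}$ by a single $\mu$ in the homology computation, and Claim~\ref{c:boundary_tau_multipoints} then makes the signed sum telescope to $\partial\tau$, which $\varphi$ fixes. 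Two further ingredients of the paper that your sketch also does not anticipate are (i) the bound $\dim\vv(\M)\le b\binom sk$ from Lemma~\ref{l:im_vk}, obtained by expressing each $\vv_{i}$ in terms of the $\vv_j$ with $v_1\in\sigma_j$, and (ii) the use of a proper coloring of the Kneser graph to reuse each multipoint for all $k$-faces of a given color, reducing from $m=\binom{s+1}{k+1}$ routings to $s-2k+1$. Your vertex-count happens to match the stated $n_0$, but the accounting you give for it (one block of $s-2k$ vertices per $k$-face per basis element) does not correspond to a construction that achieves Condition~2.

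As a minor additional remark, your sign is off: with the $\lambda_j(z)$ as you define them (so that $f_\ast[z-\sum_j\lambda_j(z)\partial\rho_j]=0$), sending $z$ to $z+\sum_j\lambda_j(z)\partial\rho_j$ would push $f_\ast[z]$ to $2\sum_j\lambda_j(z)f_\ast[\partial\rho_j]$, not to $0$.
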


\noindent
Therefore, if $s \geq qk + 2q - 2$, then $f \circ g$ cannot be a
$q$-almost embedding by Volovikov's theorem. We deduce that $f$
  is not a $q$-almost-embedding either, and
  Theorem~\ref{t:kuhnel-general} immediately follows. This deduction
  requires the following lemma (proven in Section~\ref{s:prelim}) as
  in general, a composition of a $q$-almost-embedding and an
  almost-embedding is not always a $q$-almost-embedding.

\begin{lemma}
  \label{l:compose_ae_general}
  Let $K$ and $L$ be simplicial complexes and $X$ a topological space.
  Suppose $g$ is an almost embedding of $K$ into $|L|$ and $f$ is a
  $q$-almost embedding of $L$ into $X$ for some integer $q \geq 2$.
  Then $f \circ g$ is a $q$-almost embedding of $K$ into $X$, provided
  that $g$ is the realization of a simplicial map $\gsimp$ from some
  subdivision $K'$ of $K$ to $L$.
\end{lemma}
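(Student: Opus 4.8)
The plan is to prove the contrapositive at the level of individual points. Assume $f\circ g$ is not a $q$-almost-embedding: there are pairwise disjoint simplices $\sigma_1,\dots,\sigma_q\in K$ and a point $x\in X$ with $x\in f\big(g(|\sigma_i|)\big)$ for each $i$. For every $i$ I would pick $y_i\in|\sigma_i|$ with $f(g(y_i))=x$, and let $\sigma_i'\in K'$ be the carrier of $y_i$ in the subdivision $K'$ (the unique simplex of $K'$ containing $y_i$ in its relative interior). Since $K'$ subdivides $K$, the simplices of $K'$ contained in $|\sigma_i|$ form a subcomplex of $K'$ triangulating $|\sigma_i|$; hence $\sigma_i'$ belongs to it, so $|\sigma_i'|\subseteq|\sigma_i|$ and, in particular, every vertex of $\sigma_i'$ lies in $|\sigma_i|$.

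The one geometric fact I need is that a realized simplicial map sends relative interiors onto relative interiors: if $y$ is in the relative interior of $|\sigma'|$ for some $\sigma'\in K'$, then $g(y)=|\gsimp|(y)$ lies in the relative interior of $|\gsimp(\sigma')|$, because when the barycentric coordinates of $y$ are pushed forward along $\gsimp$ every vertex of the image simplex $\gsimp(\sigma')$ inherits a strictly positive coordinate. Applying this with $\sigma'=\sigma_i'$, the carrier of $z_i:=g(y_i)$ in $L$ is exactly $\tau_i:=\gsimp(\sigma_i')$, and therefore $x=f(z_i)\in f(|\tau_i|)$ for every $i$.

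It then remains to check that $\tau_1,\dots,\tau_q$ are \emph{pairwise disjoint} simplices of $L$. Granting this, $x\in\bigcap_{i=1}^q f(|\tau_i|)$ contradicts the assumption that $f$ is a $q$-almost-embedding of $L$ into $X$, and we are done. To prove disjointness, suppose $\tau_i$ and $\tau_j$ shared a vertex $w$ for some $i\ne j$. Since $\tau_i=\gsimp(\sigma_i')$ and $\gsimp$ is simplicial, $w=\gsimp(v_i)$ for some vertex $v_i$ of $\sigma_i'$, and likewise $w=\gsimp(v_j)$ for some vertex $v_j$ of $\sigma_j'$. Because $g=|\gsimp|$ agrees with $\gsimp$ on vertices, $g(v_i)=w=g(v_j)$; as $v_i\in|\sigma_i'|\subseteq|\sigma_i|$ and $v_j\in|\sigma_j'|\subseteq|\sigma_j|$, this puts $w$ in $g(|\sigma_i|)\cap g(|\sigma_j|)$. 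But $\sigma_i$ and $\sigma_j$ are disjoint and $g$ is an almost-embedding of $K$ into $|L|$, so $g(|\sigma_i|)\cap g(|\sigma_j|)=\emptyset$, a contradiction.

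I expect the only delicate point --- the ``main obstacle'' --- to be the carrier bookkeeping in the first two paragraphs: correctly identifying the carrier of $g(y_i)$ in $L$ as $\gsimp(\sigma_i')$, which relies both on the relative-interior observation and on the subdivision hypothesis (needed to guarantee $|\sigma_i'|\subseteq|\sigma_i|$). This is also exactly where the hypothesis that $g$ is simplicial on a subdivision is used: for a general continuous almost-embedding $g$, a shared vertex $w$ of the carriers $\tau_i,\tau_j$ in $L$ need not lie in $g(|\sigma_i|)\cap g(|\sigma_j|)$, since the $g$-preimage of $w$ could avoid $|\sigma_i|$ altogether. Once the carriers are pinned down, the reduction to the almost-embedding condition for $g$ and the $q$-almost-embedding condition for $f$ is immediate.
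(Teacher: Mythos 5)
Your proof is correct and follows essentially the same route as the paper's: reduce to simplices of the subdivision $K'$ lying inside the $\sigma_i$, map them forward by $\gsimp$ to faces of $L$, observe that these images are pairwise disjoint because $g$ is an almost-embedding (a shared vertex would give a point in $g(|\sigma_i|)\cap g(|\sigma_j|)$), and then invoke the $q$-almost-embedding property of $f$. The paper argues directly rather than by contrapositive and dispenses with the carrier bookkeeping: it simply fixes, for each $i$, an arbitrary face $\vartheta_i\in K'$ subdividing $\sigma_i$, notes $g(|\vartheta_i|)=|\gsimp(\vartheta_i)|$, and shows $f(|\gsimp(\vartheta_1)|)\cap\cdots\cap f(|\gsimp(\vartheta_q)|)=\emptyset$ for every such choice; since $|\sigma_i|$ is the union of the $|\vartheta_i|$, this gives the claim. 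Your ``relative interior to relative interior'' observation pins down the carrier of $g(y_i)$ exactly, but it is not needed for the argument to go through --- it suffices that $g(|\sigma_i'|)\subseteq|\gsimp(\sigma_i')|$, which is automatic for a realized simplicial map. So the extra care you take at the carrier step is harmless but can be dropped, bringing your proof to the paper's two-line version.
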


\begin{remark}
 The third author proved in his thesis~\cite{patak:thesis-2015} a slightly
  better bound on $n_0$ in Lemma~\ref{l:chain_p}, namely
 $n_0=\binom{s}{k}b(s-2k) + s+1$.
 The proof, however, uses colorful version of Lemma~\ref{l:odd}. Since the proof of the colorful version
 is long and technical and in the end it only improves the bound in Theorem~\ref{t:kuhnel-2} by $2$,
 we have decided to present the more accessible version of the argument.
\end{remark}

\paragraph{Paper organization.}

Before we establish Lemma~\ref{l:chain_p} (in Section~\ref{s:strong}),
thus completing the proof of Theorem~\ref{t:kuhnel-general}, we first prove a
weaker version that introduces the main ideas in a simpler setting,
and yields a weaker bound for $n_0$, stated in
Equation~\eqref{eq:weak-bound}. The reader interested only in
  the case $q=2$ may want to consult a preliminary version of this
  paper~\cite{Kuhnel_conference_version} tailored to that case (where
  homology computations are without signs and the construction of the
  subdivision $D$ is simpler).

\section{Preliminaries}
\label{s:prelim}

We begin by fixing some terminology and notation. We will use
$\card(U)$ to denote the cardinality of a set $U$.

We recall that the \emph{stellar subdivision} of a maximal face
$\vartheta$ in a simplicial complex $K$ is obtained by removing
$\vartheta$ from $K$ and adding a cone $a_\vartheta \ast (\partial
\vartheta)$, where $a_\vartheta$ is a newly added vertex, the apex of
the cone (see Figure~\ref{f:stellar}).
\begin{figure}
\begin{center}
\includegraphics{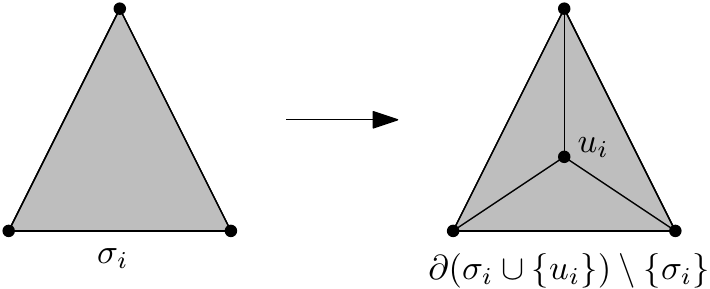}
\caption{A stellar subdivision of a simplex.}
\label{f:stellar}
\end{center}
\end{figure}

Throughout this paper we only work with homology groups and Betti numbers over
$\Z_p$, and for simplicity, we mostly drop the coefficient
group $\Z_p$ from the notation. Moreover, we will need to switch back and forth
between singular and simplicial homology. More precisely, if $K$ is a
simplicial complex then $H_*(K)$ will mean the simplicial homology of $K$,
whereas $H_*(X)$ will mean the singular homology of a topological space $X$. In
particular, $H_*(|K|)$ denotes the singular homology of the underlying space
$|K|$ of a complex $K$. We use analogous conventions for $C_*(K), C_*(X)$ and
$C_*(|K|)$ on the level of chains, and likewise for the subgroups of cycles and
boundaries, respectively.\footnote{We remark that throughout this paper, we
will only work with spaces that are either (underlying spaces of) simplicial
complexes or topological manifolds. Such spaces are homotopy equivalent to CW
complexes \cite[Corollary~1]{Milnor:On-spaces-having-the-homotopy-type-1959},
and so on the matter of homology, it does not really matter which (ordinary,
i.e., satisfying the dimension axiom) homology theory we use as they are all
naturally equivalent for CW complexes
\cite[Thm.~4.59]{Hatcher:AlgebraicTopology-2002}. However the distinction
between the simplicial and the singular setting will be relevant on the level
of chains.} Given a cycle $c$, we denote by $[c]$ the homology class it
represents.

A mapping $h \colon |K| \to X$ induces a chain map $h_\sharp^{\sing}
\colon C_*(|K|) \to C_*(X)$ on the level of singular chains;
see~\cite[Chapter 2.1]{Hatcher:AlgebraicTopology-2002}. There is also
a canonical chain map $\iota_K \colon C_*(K) \to C_*(|K|)$ inducing
the isomorphism of $H_*(K)$ and $H_*(|K|)$, see again~\cite[Chapter
2.1]{Hatcher:AlgebraicTopology-2002}. We define $h_\sharp \colon
C_*(K) \to C_*(X)$ as $h_\sharp := h_\sharp^{\sing} \circ \iota_K$.
The three chain maps mentioned above also induce maps $h_*^{\sing}$,
$(\iota_K)_*$, and $h_*$ on the level of homology satisfying $h_* =
h_*^{\sing} \circ (\iota_K)_*$.
We need a technical lemma saying that our maps compose, in a right
way, on the level of homology.

\begin{lemma}
  \label{l:commutative_diagram}
  Let $K$ and $L$ be simplicial complexes and $X$ a topological space.
  Let $j_{\simp}$ be a simplicial map from $K$ to $L$, $j\colon |K|
  \to |L|$ the continuous map induced by $j_{\simp}$ and $h\colon |L|
  \to X$ be another continuous map. Then $h_* \circ (j_{\simp})_* = (h
  \circ j)_*$ where $(j_{\simp})_*\colon H_*(K) \to H_*(L)$ is the map
  induced by $j_{\simp}$ on the level of simplicial homology and the maps $h_*$ and $(h \circ j)_*$ are as defined above.
\end{lemma}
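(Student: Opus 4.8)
The plan is to trace the definitions of the chain maps given just above and reduce everything to the naturality of the isomorphism between simplicial and singular homology. First I would unwind both sides on the level of chains. By definition $h_\sharp = h_\sharp^{\sing}\circ\iota_L$ and $j_\sharp = j_\sharp^{\sing}\circ\iota_K$, so $(h\circ j)_\sharp = (h\circ j)_\sharp^{\sing}\circ\iota_K = h_\sharp^{\sing}\circ j_\sharp^{\sing}\circ\iota_K$, using functoriality of the singular chain functor. On the other side, $h_\sharp\circ(j_{\simp})_\sharp = h_\sharp^{\sing}\circ\iota_L\circ(j_{\simp})_\sharp$, where $(j_{\simp})_\sharp\colon C_*(K)\to C_*(L)$ is the simplicial chain map induced by $j_{\simp}$. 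So it suffices to prove the chain-level identity $\iota_L\circ(j_{\simp})_\sharp = j_\sharp^{\sing}\circ\iota_K\colon C_*(K)\to C_*(|L|)$, and then pass to homology; the desired statement $h_*\circ(j_{\simp})_* = (h\circ j)_*$ follows by applying $H_*$ and using that $H_*$ is a functor.

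Next I would establish that chain-level identity $\iota_L\circ(j_{\simp})_\sharp = j_\sharp^{\sing}\circ\iota_K$. This is precisely the statement that the canonical chain maps $\iota_K,\iota_L$ from simplicial to singular chains are natural with respect to simplicial maps. Concretely, $\iota_K$ sends an oriented $m$-simplex $\sigma = [v_0,\dots,v_m]$ of $K$ to the singular simplex given by the affine (characteristic) map $\Delta^m\to|K|$ onto $|\sigma|$ with that vertex order. Applying $j_\sharp^{\sing}$ postcomposes with $|j|$, yielding the singular simplex $\Delta^m\to|L|$ that parametrizes the (possibly degenerate) image of $|\sigma|$. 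On the other route, $(j_{\simp})_\sharp$ sends $[v_0,\dots,v_m]$ to $[j_{\simp}(v_0),\dots,j_{\simp}(v_m)]$ when these are distinct and to $0$ otherwise (the standard convention for the simplicial chain map of a simplicial map), and $\iota_L$ then turns this into the corresponding affine singular simplex in $|L|$. These two singular chains agree: $|j|$ restricted to $|\sigma|$ is exactly the affine map determined by $v_i\mapsto j_{\simp}(v_i)$, which is the constant/degenerate singular simplex that represents $0$ in the normalized theory precisely when two of the $j_{\simp}(v_i)$ coincide. Hence the two composites agree on generators, so they agree as chain maps. (This is a standard fact, essentially \cite[Chapter~2.1]{Hatcher:AlgebraicTopology-2002}; I would cite it rather than belabor the degeneracy bookkeeping.)

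Finally, passing to homology: since $(\iota_K)_*$ and $(\iota_L)_*$ are the isomorphisms $H_*(K)\xrightarrow{\sim}H_*(|K|)$ and $H_*(L)\xrightarrow{\sim}H_*(|L|)$, the chain identity gives $(\iota_L)_*\circ(j_{\simp})_* = j_*^{\sing}\circ(\iota_K)_*$. Then
\[
h_*\circ(j_{\simp})_* \;=\; h_*^{\sing}\circ(\iota_L)_*\circ(j_{\simp})_* \;=\; h_*^{\sing}\circ j_*^{\sing}\circ(\iota_K)_* \;=\; (h\circ j)_*^{\sing}\circ(\iota_K)_* \;=\; (h\circ j)_*,
\]
using the definitions $h_* = h_*^{\sing}\circ(\iota_L)_*$ and $(h\circ j)_* = (h\circ j)_*^{\sing}\circ(\iota_K)_*$ recorded in the paragraph before the lemma, together with functoriality of singular homology. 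I do not expect a serious obstacle here: the only mildly delicate point is the normalization/degeneracy convention in the chain-level naturality of $\iota$, i.e. checking that a simplicial map sending an ordered simplex to a lower-dimensional one induces $0$ on that generator in a way compatible with the singular picture; this is routine and handled by the standard reference.
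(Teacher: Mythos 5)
Your proposal is correct and follows essentially the same route as the paper: both arguments reduce the claim to (i) the definitional identities $h_* = h_*^{\sing}\circ(\iota_L)_*$ and $(h\circ j)_* = (h\circ j)_*^{\sing}\circ(\iota_K)_*$, (ii) functoriality of singular homology, and (iii) naturality of the simplicial-to-singular comparison, which the paper cites as \cite[Thm~34.4]{Munkres:AlgebraicTopology-1984} and you cite to Hatcher. The only cosmetic difference is that you phrase step (iii) as a chain-level identity $\iota_L\circ(j_{\simp})_\sharp = j_\sharp^{\sing}\circ\iota_K$, which strictly holds only in the normalized singular chain complex (in the unnormalized one a collapsed simplex gives a nonzero degenerate singular simplex, not $0$); you flag this yourself and correctly note that it is immaterial once one passes to homology, which is the level at which the cited naturality statement lives and at which the paper works directly.
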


\begin{proof}
The proof follows from the commutativity of the diagram below.

\includegraphics{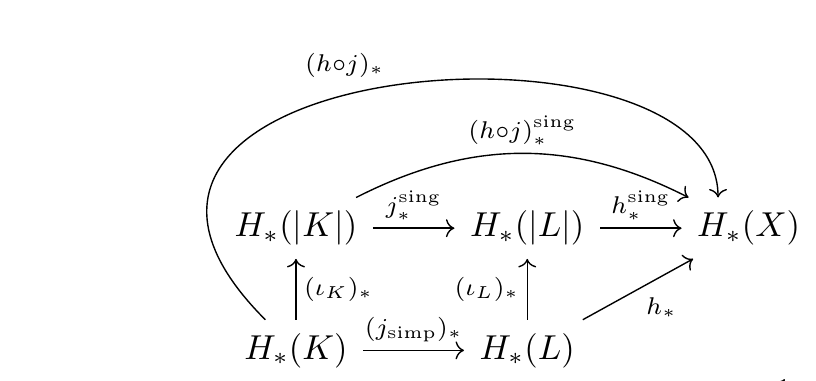}


\medskip 
 The commutativity of the lower right triangle follows from the definition of $h_*$.
 Similarly $(h \circ j)_* = (h \circ j)_*^{\sing} \circ (\iota_K)_*$. The fact that 
 $(h\circ j)_*^{\sing} =  h_*^{\sing} \circ j_*^{\sing}$
 follows from
 functionarility of the singular homology. The commutativity of the
 square follows from the naturality of the equivalence of the singular
 and simplicial homology; see~\cite[Thm
   34.4]{Munkres:AlgebraicTopology-1984}.
 \end{proof}

We now prove the final technical step of our approach, stated in the introduction.

\begin{proof}[Proof of Lemma~\ref{l:compose_ae_general}]
  Let $\sigma_1, \dots, \sigma_q$ be $q$ pairwise disjoint faces of
  $K$. Our task is to show $f\circ g(|\sigma_1|)\cap \cdots \cap f
  \circ g(|\sigma_q|) = \emptyset$. Let $\vartheta_i$ be a face of
  $K'$ that subdivides $\sigma_i$ for $i \in [q]$. We are done, if we
  prove
  \begin{equation}
    \label{e:thetas}
    f\circ g(|\vartheta_1|)\cap \cdots \cap f
    \circ g(|\vartheta_q|) = \emptyset
  \end{equation}
  for every such possible choice of $\vartheta_1, \dots, \vartheta_q$.

  The faces $\vartheta_1, \dots, \vartheta_q$ are pairwise disjoint
  since $\sigma_1, \dots, \sigma_q$ are pairwise disjoint. Since
  $\gsimp$ is a simplicial map inducing an almost embedding, the faces
  $\gsimp(\vartheta_1), \dots, \gsimp(\vartheta_q)$ are pairwise
  disjoint faces of $L$. Consequently, \eqref{e:thetas} follows from
  the fact that $f$ is a $q$-almost embedding.
\end{proof}

\section{Proof of Lemma~\ref{l:chain_p} with a weaker bound on $n_0$}\label{s:weak}

Let $k,b,s$ be fixed integers. We consider a $2k$-manifold $M$ with
$k$th Betti number $b$, a map $f:|\skelsim{k}{n}| \to M$.
Recall that although we want to build an almost-embedding, homology is computed over~$\Z_p$. The strategy of
  our proof of Lemma~\ref{l:chain_p} is to start by designing an
  auxiliary chain map
$$
\varphi\colon C_*\left(\skelsim{k}{s}\right)\to C_*\left(\skelsim{k}{n}\right).
$$
that behaves as an almost-embedding, in the sense that whenever
$\sigma$ and $\sigma'$ are disjoint $k$-faces of $\simplex s$,
$\varphi(\sigma)$ and $\varphi(\tau)$ have disjoint supports, and such
that for every $(k+1)$-face $\tau$ of $\simplex{s}$ the homology
class $[(f_\sharp \circ \varphi)(\partial \tau)]$ is trivial. We then
use $\varphi$ to design a subdivision $D$ of $\skelsim ks$ and a
simplicial map $\gsimp: D \to\skelsim kn$ that induces a map $g: |D|
\to |\skelsim kn|$ with the desired properties: $g$ is an
almost-embedding and $(f \circ g)_*([\partial \tau])$ is trivial for
all $(k+1)$-faces $\tau$ of $\simplex{s}$. Since the cycles $\partial
\tau$, for $(k+1)$-faces $\tau$ of $\simplex s$, generate all
$k$-cycles of $\skelsim ks$, this implies that $(f \circ g)_*$ is
trivial.

The purpose of this section is to give a first implementation of the
above strategy that proves Lemma~\ref{l:chain_p} with a bound of
\begin{equation}
\label{eq:weak-bound}
\hfill
n_0 \ge  \pth{\binom{s+1}{k+1}-1}p^{b\binom{s+1}{k+1}} + s + 1.
\hfill
\end{equation}
In Section~\ref{s:strong} we then improve this bound to
$\binom{s}{k}b(s-2k) + 2s-2k+1$ at the cost of some technical
complications (note that the improved bound is independent of $p$).

\bigskip

Throughout the rest of this paper we use the following notations. We
let $\{v_1, v_2, \ldots, v_{n+1}\}$ denote the set of vertices of
$\simplex{n}$ and we assume that $\simplex{s}$ is the induced
subcomplex of $\simplex{n}$ on $\{v_1, v_2, \ldots, v_{s+1}\}$. We let
$U = \{v_{s+2}, v_{s+3}, \ldots, v_{n+1}\}$ denote the set of vertices
of $\simplex{n}$ \emph{unused} by $\simplex{s}$.  We let
$m=\binom{s+1}{k+1}$ and denote by $\sigma_1, \sigma_2,\ldots,
\sigma_m$ the $k$-faces of $\simplex s$, ordered lexicographically.

Later on, when working with homology, we compute the simplicial
homology with respect to this fixed order on the vertices of
$\simplex{n}$. In particular, the boundary of a $j$-simplex $\vartheta =
\{v_{i_1}, v_{i_2},\dots, v_{i_{j+1}}\}$ where $i_1 \leq i_2 \leq \dots \leq i_{j+1}$ is
$$
\partial \vartheta = \sum\limits_{\ell=1}^{j+1} (-1)^{\ell+1} \vartheta \setminus
\{v_{i_{\ell}}\}.
$$

\subsection{Construction of $\varphi$}

For every face $\vartheta$ of $\simplex s$ of dimension at most $k-1$
we set $\varphi(\vartheta) = \vartheta$. We then ``route''
each~$\sigma_i$ by mapping it to its stellar subdivision with an apex $u
\in U$, \emph{i.e.} by setting $\varphi(\sigma_i)$ to $\sigma_i +
(-1)^k z(\sigma_i,u)$ where $z(\sigma_i,u)$ denotes the cycle $\partial(\sigma_i \cup
\{u\})$; see Figure~\ref{f:phi_edge} for the case $k = 1$.

\begin{figure}
\begin{center}
\includegraphics{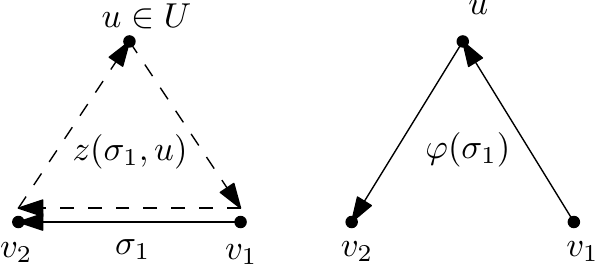}
\caption{Rerouting $\sigma_i$ for $k=1$. The support
  of $z(\sigma_i,u)$ is dashed on the left, and the support of
  resulting $\varphi(\sigma_i)$ is on the right.}
\label{f:phi_edge}
\end{center}
\end{figure}

We ensure that $\varphi$ behaves as an almost-embedding by using a
different apex $u\in U$ for each $\sigma_i$. The difficulty is to
choose these $m$ apices in a way that $[f_\sharp(\varphi(\partial
\tau))]$ is trivial for every $(k+1)$-face $\tau$ of $\simplex{s}$.
To that end we associate to each $u\in U$ the sequence
$$ 
\vv(u) := ([f_\sharp(z(\sigma_1,u))],
  [f_\sharp(z(\sigma_2,u))],\dots,
[f_\sharp(z(\sigma_m,u)]) \in H_k(M)^m,
$$
and we denote by $\vv_i(u)$ the $i$th element of $\vv(u)$.  We work
with $\Z_p$-homology, so $H_k(M)^m$ is finite; more precisely, its
cardinality equals $p^{bm}$. From $n \ge n_0 = (m-1)p^{bm} + s + 1$ we
get that $\card(U) \ge (m-1)\card(H_k(M)^m)+1$.
 The pigeonhole
principle then guarantees that there exist $m$ distinct vertices
$u_1, u_2, \ldots, u_m$ of $U$ such that $\vv(u_1) = \vv(u_2) = \cdots
= \vv(u_m)$. We use $u_i$ to ``route'' $\sigma_i$ and put
\begin{equation}\label{eq:defphi}
\hfill
\varphi(\sigma_i):= \sigma_i + (-1)^kz(\sigma_i,u_i).
\hfill
\end{equation}
We finally extend $\varphi$ linearly to $C_*\left(\skelsim{k}{s}\right)$.

\begin{lemma}\label{l:zero_homology_points}
The map  $\varphi$ is a chain map and $\bigl[f_\sharp\bigl(\varphi(\partial
  \tau)\bigr)\bigr]= 0$ for every $(k+1)$-face $\tau\in\simplex{s}$.
\end{lemma}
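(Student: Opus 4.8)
The plan is to verify the two assertions separately, the chain-map property being essentially formal and the vanishing of the homology class being the substantive point (though not hard, given the careful setup). First I would check that $\varphi$ is a chain map. On faces of dimension $\le k-1$ it is the identity, so $\partial\varphi(\vartheta)=\partial\vartheta=\varphi(\partial\vartheta)$ there. The only faces of $\skelsim ks$ that remain are the $k$-faces $\sigma_i$, and one computes $\partial\varphi(\sigma_i)=\partial\sigma_i+(-1)^k\partial z(\sigma_i,u_i)$; since $z(\sigma_i,u_i)=\partial(\sigma_i\cup\{u_i\})$ is a boundary, $\partial z(\sigma_i,u_i)=0$, so $\partial\varphi(\sigma_i)=\partial\sigma_i=\varphi(\partial\sigma_i)$, the last equality because $\partial\sigma_i$ is a $(k-1)$-chain. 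Hence $\varphi$ commutes with $\partial$ on generators and, being linear, is a chain map.

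For the second assertion, fix a $(k+1)$-face $\tau=\{v_{j_0},\dots,v_{j_{k+1}}\}$ of $\simplex s$ with $j_0<\dots<j_{k+1}$. Its boundary is $\partial\tau=\sum_{\ell=0}^{k+1}(-1)^{\ell}\sigma_{i(\ell)}$, where $\sigma_{i(\ell)}=\tau\setminus\{v_{j_\ell}\}$ is one of the $k$-faces of $\simplex s$. Applying $\varphi$ and using \eqref{eq:defphi},
\[
\varphi(\partial\tau)=\sum_{\ell=0}^{k+1}(-1)^{\ell}\sigma_{i(\ell)}
+(-1)^k\sum_{\ell=0}^{k+1}(-1)^{\ell}z(\sigma_{i(\ell)},u_{i(\ell)}).
\]
The first sum is just $\partial\tau$ itself, which as a chain in $\skelsim ks$ is a $k$-cycle; under $f_\sharp$ it maps to a singular $k$-cycle, and since $\tau$ is an actual $(k+1)$-simplex of $\skelsim kn$ (recall $\simplex s\subseteq\simplex n$), its image $f_\sharp(\tau)$ is a singular $(k+1)$-chain with $\partial f_\sharp(\tau)=f_\sharp(\partial\tau)$. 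Therefore $[f_\sharp(\partial\tau)]=0$ in $H_k(M)$. It remains to handle the second sum. Here I would invoke the pigeonhole choice: all vertices $u_1,\dots,u_m$ were chosen so that $\vv(u_1)=\dots=\vv(u_m)$, i.e. $[f_\sharp(z(\sigma_i,u_i))]=[f_\sharp(z(\sigma_i,u_1))]$ for every $i$ — in fact the homology class $[f_\sharp(z(\sigma_i,u))]$ does not depend on which $u\in\{u_1,\dots,u_m\}$ we pick. So replacing each $u_{i(\ell)}$ by a single fixed apex $u:=u_1$ changes $\sum_{\ell}(-1)^{\ell}z(\sigma_{i(\ell)},u_{i(\ell)})$ by a boundary, and we are reduced to showing $\bigl[f_\sharp\bigl(\sum_{\ell=0}^{k+1}(-1)^{\ell}z(\sigma_{i(\ell)},u)\bigr)\bigr]=0$.

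The key observation for this last point is that, up to sign, $\sum_{\ell=0}^{k+1}(-1)^{\ell}z(\sigma_{i(\ell)},u)$ is a boundary already at the simplicial level in $\skelsim k{n}$'s ambient simplex: consider the $(k+2)$-face $\tau\cup\{u\}$ of $\simplex n$. Its boundary is $\partial(\tau\cup\{u\})=\partial\tau\cup\{u\}+(-1)^{?}\tau$ (with the sign dictated by the position of $u$ in the ordered vertex set, which one works out from the boundary formula), and $\partial\tau\cup\{u\}=\sum_{\ell}(-1)^{\ell}(\sigma_{i(\ell)}\cup\{u\})$, while $z(\sigma_{i(\ell)},u)=\partial(\sigma_{i(\ell)}\cup\{u\})$. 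Applying $\partial$ once more, $\sum_{\ell}(-1)^{\ell}z(\sigma_{i(\ell)},u)=\sum_{\ell}(-1)^{\ell}\partial(\sigma_{i(\ell)}\cup\{u\})=\partial\bigl(\partial\tau\cup\{u\}\bigr)=\pm\partial\tau$ up to the sign correction coming from $\partial^2(\tau\cup\{u\})=0$; more cleanly, $\partial\bigl(\partial\tau\cup\{u\}\bigr)=-\partial(\pm\tau)=\mp\partial\tau$ using $\partial\partial(\tau\cup\{u\})=0$. Thus $f_\sharp$ of this chain equals $\pm f_\sharp(\partial\tau)$ up to a singular boundary, and we already showed $[f_\sharp(\partial\tau)]=0$. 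Assembling the pieces, $[f_\sharp(\varphi(\partial\tau))]=0$.

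The main obstacle I anticipate is purely bookkeeping: getting all the signs right simultaneously in the three places they enter — the $(-1)^k$ in the definition of $\varphi$, the alternating signs in $\partial\tau$, and the sign in $\partial(\sigma_i\cup\{u\})$ relative to the ordering of vertices (since $u\in U$ sits after all of $v_1,\dots,v_{s+1}$, it is always the last vertex, which fixes the sign convention). A clean way to organize this is to note once and for all that for a $j$-face $\sigma$ of $\simplex s$ and $u\in U$, $z(\sigma,u)=\partial(\sigma\cup\{u\})=(-1)^{j+1}\sigma+(\text{chain supported on }\sigma\cup\{u\}\text{ using }u)$, so that $\sigma+(-1)^k z(\sigma,u)$ for $j=k$ is homologous, as a relative chain, to the stellar subdivision picture in Figure~\ref{f:phi_edge}. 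But for the homology computation the signs are immaterial: every term is either the cycle $\partial\tau$ (whose $f_\sharp$-image bounds $f_\sharp(\tau)$) or a simplicial boundary $\partial(\cdot)$ (whose $f_\sharp$-image is a singular boundary), and a signed sum of null-homologous cycles is null-homologous.
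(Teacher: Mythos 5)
Your verification that $\varphi$ is a chain map is fine and matches the paper's. The problem is in the second half, and it is not a sign issue but a substantive one: you assert that $[f_\sharp(\partial\tau)]=0$ ``since $\tau$ is an actual $(k+1)$-simplex of $\skelsim kn$.'' But $\tau$ is $(k+1)$-dimensional and $\skelsim kn$ is the $k$-skeleton, so $\tau\notin\skelsim kn$; the map $f$ is only defined on $|\skelsim kn|$, and $f_\sharp(\tau)$ does not exist. Indeed, if $[f_\sharp(\partial\tau)]$ were trivial for every $(k+1)$-face $\tau$, then $f_*\colon H_k(\skelsim kn)\to H_k(M)$ would already be trivial and the whole apparatus with the apices $u_i$ would be superfluous --- the nontriviality of these classes is precisely the obstruction the lemma is designed to handle.

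The fix, which is what the paper does, is to not split the expression into $\partial\tau$ plus ``a boundary.'' After replacing each $u_{i_j}$ by $u_1$ (legitimate precisely because $\vv(u_1)=\cdots=\vv(u_m)$), one uses Claim~\ref{c:boundary_tau_points} --- the identity $\partial\tau=\sum_{j=1}^{k+2}(-1)^{j+1}z(\sigma_{i_j},u_1)$, obtained from $\partial^2(\tau\cup\{u_1\})=0$, which is essentially the computation you sketch at the end --- to conclude that $\partial\tau+\sum_{j=1}^{k+2}(-1)^j z(\sigma_{i_j},u_1)$ is the \emph{zero chain} in $C_k(\skelsim kn)$. Its image under $f_\sharp$ is then zero for free, with no appeal to $[f_\sharp(\partial\tau)]$ in isolation. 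Note that getting the exact cancellation matters: with the opposite sign you would be left with $2\partial\tau$, which vanishes mod $2$ but not mod $p$ in general, so over $\Z_p$ you cannot be cavalier about ``up to sign'' here and then fall back on a vanishing of $[f_\sharp(\partial\tau)]$ that does not hold.
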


Before proving the lemma, we establish a simple claim that will also
be useful later.

\begin{claim}
\label{c:boundary_tau_points}
  Let $\tau$ be a $(k+1)$-face of $\simplex s$ and let $u \in U$. Let
  $\sigma_{i_1}, \dots, \sigma_{i_{k+2}}$ be all the $k$-faces of $\tau$
  sorted lexicographically, that is, $i_1 \leq \cdots \leq i_{k+2}$. Then
  \begin{equation}
    \label{e:boundary_tau_points}
      \partial \tau =
       z(\sigma_{i_1}, u) - z(\sigma_{i_2}, u) + \cdots +
   (-1)^{k+1}z(\sigma_{i_{k+2}},u).
   \end{equation}
\end{claim}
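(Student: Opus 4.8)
\textbf{Proof plan for Claim~\ref{c:boundary_tau_points}.}
The plan is to realize the asserted identity as an instance of $\partial^2=0$, applied not to $\tau$ itself but to the augmented $(k+2)$-simplex $\tau\cup\{u\}$. The point is that $u\in U$ is, by our fixed ordering of the vertices of $\simplex n$, strictly larger than every vertex of $\tau$, so $u$ occupies the last position in $\tau\cup\{u\}$. Writing $\tau=\{v_{j_1},\dots,v_{j_{k+2}}\}$ with $j_1<\dots<j_{k+2}$, I would first expand
$$
\partial(\tau\cup\{u\})=\sum_{\ell=1}^{k+2}(-1)^{\ell+1}\bigl((\tau\setminus\{v_{j_\ell}\})\cup\{u\}\bigr)+(-1)^{k}\,\tau ,
$$
the last summand coming from deleting $u$ (at position $k+3$). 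Applying $\partial$ and using $\partial^2=0$, together with the definition $z(\sigma,u)=\partial(\sigma\cup\{u\})$, yields
$$
\partial\tau=\sum_{\ell=1}^{k+2}(-1)^{k+\ell}\,z(\tau\setminus\{v_{j_\ell}\},u).
$$

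The remaining work is purely bookkeeping: identifying which lexicographic rank the face $\tau\setminus\{v_{j_\ell}\}$ carries among the $k$-faces $\sigma_{i_1},\dots,\sigma_{i_{k+2}}$ of $\tau$, and checking that the signs $(-1)^{k+\ell}$ transform into $(-1)^{r+1}$ under that reindexing. Here one observes that deleting a \emph{larger} vertex from $\tau$ produces a lexicographically \emph{smaller} $(k+1)$-subset (the two resulting sequences agree on the prefix $v_{j_1},\dots,v_{j_{\ell-1}}$ and then the face keeping $v_{j_\ell}$ wins), so $\tau\setminus\{v_{j_\ell}\}=\sigma_{i_{\,k+3-\ell}}$. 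Substituting $r=k+3-\ell$ gives $(-1)^{k+\ell}=(-1)^{k+(k+3-r)}=(-1)^{r+1}$, which is exactly the pattern in \eqref{e:boundary_tau_points}.

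I expect the sign/order bookkeeping in this last step to be the only real subtlety; the algebraic core is the single application of $\partial^2=0$. An alternative, if one prefers to avoid the order-reversal argument, is to expand each $z(\sigma_{i_r},u)=(-1)^{k+1}\sigma_{i_r}+(\text{chains containing }u)$ directly: summing $\sum_r(-1)^{r+1}z(\sigma_{i_r},u)$, the chains containing $u$ are indexed by pairs $(\rho,u)$ with $\rho$ a $(k-1)$-face of $\tau$, each such $\rho$ arising from its two cofaces among the $\sigma_{i_r}$ with opposite signs (this is again $\partial^2=0$ on $\tau$, transported along $\cdot\cup\{u\}$), so they cancel; what survives is $(-1)^{k+1}\sum_r(-1)^{r+1}\sigma_{i_r}$, and one checks $\sum_r(-1)^{r+1}\sigma_{i_r}=(-1)^{k+1}\partial\tau$ using the same correspondence $\sigma_{i_r}=\tau\setminus\{v_{j_{k+3-r}}\}$. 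Either route is a few lines; I would present the $\partial(\tau\cup\{u\})$ version as the cleaner one. A small sanity check with $k=1$, $\tau=\{v_1,v_2,v_3\}$, $u=v_4$ confirms the signs.
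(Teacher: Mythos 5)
Your proof is correct and takes the same route as the paper: both expand the identity $0=\partial^2(\tau\cup\{u\})$, using that $u$ is last in the vertex order, and then reindex the resulting summands $\tau\setminus\{v_{j_\ell}\}$ by their lexicographic rank among the $k$-faces of $\tau$. You spell out the order-reversal and sign bookkeeping $(\tau\setminus\{v_{j_\ell}\}=\sigma_{i_{k+3-\ell}}$, $(-1)^{k+\ell}=(-1)^{r+1})$ more explicitly than the paper, which just states the expansion in one display, but the argument is identical.
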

	    
\begin{proof}
This follows from expanding the equation $0 = \partial^2(\tau\cup\{u\})$.
Indeed, \[ \begin{split} 0 = \partial^2(\tau\cup\{u\}) &= \partial\bigl(
\sigma_{i_{k+2}} \cup \{u\} - \sigma_{i_{k+1}} \cup \{u\} + \cdots +
(-1)^{k+1}\sigma_{i_1} \cup \{u\} + (-1)^{k+2}\tau\bigr) \\ &= (-1)^{k+1}\bigl(
- \partial \tau + z(\sigma_{i_1},u) - z(\sigma_{i_2},u) + \cdots +(-1)^{k+1}
z(\sigma_{i_{k+2}},u) \bigr).\\ \end{split} \] 
\end{proof}

\begin{proof}[Proof of Lemma~\ref{l:zero_homology_points}]
  The map $\varphi$ is the identity on $\ell$-chains with $\ell \leq
  k-1$ and Equation~\eqref{eq:defphi} immediately implies that
  $\partial \varphi(\sigma) = \partial \sigma$ for every $k$-simplex
  $\sigma$. It follows that $\varphi$ is a chain map.

  Now let $\tau$ be a $(k+1)$-simplex of $\simplex{s}$ and let
  $\sigma_{i_1}, \dots, \sigma_{i_{k+2}}$ be its $k$-faces. We have
\[\begin{aligned}
  f_\sharp \circ \varphi (\partial\tau) = f_\sharp \circ \varphi \left(
    \sum_{j=1}^{k+2} (-1)^{k+j} \sigma_{i_j} \right) & = f_\sharp \left( \sum_{j=1}^{k+2} (-1)^{j+k}\left(\sigma_{i_j} +
      (-1)^kz(\sigma_{i_j},u_{i_j})\right) \right)\\
  & = f_\sharp(\partial\tau) +
  \sum_{j=1}^{k+2} (-1)^jf_\sharp\bigl(z(\sigma_{i_j}, u_{i_j})\bigr).
\end{aligned}\]
  $\bigl[f_\sharp\bigl(z(\sigma_{i_j},u_{\ell})
  \bigr)\bigr] = \vv_{i_j}(u_\ell)$ is independent of the value
  $\ell$. When passing to the homology classes in the above identity,
  we can therefore replace each $u_{i_j}$ with $u_1$, and obtain,
  $$
    \left[f_\sharp\circ \varphi (\partial\tau)\right] =
    [f_\sharp(\partial\tau)] + \sum_{j=1}^{k+2} (-1)^j
    \Bigl[f_\sharp\bigl(z(\sigma_{i_j}, u_1)
    \bigr)\Bigr]
    = 
    \Bigl[f_\sharp\Bigl(
      \partial \tau + \sum_{j=1}^{k+2} (-1)^j z(\sigma_{i_j}, u_1)
    \Bigr)\Bigr].
  $$
  This class is trivial by Claim~\ref{c:boundary_tau_points}.
  Figure~\ref{f:homology_trivial} illustrates the geometric
    intuition behind this proof.
\end{proof}

\begin{figure}{t}
  \begin{center}
    \includegraphics{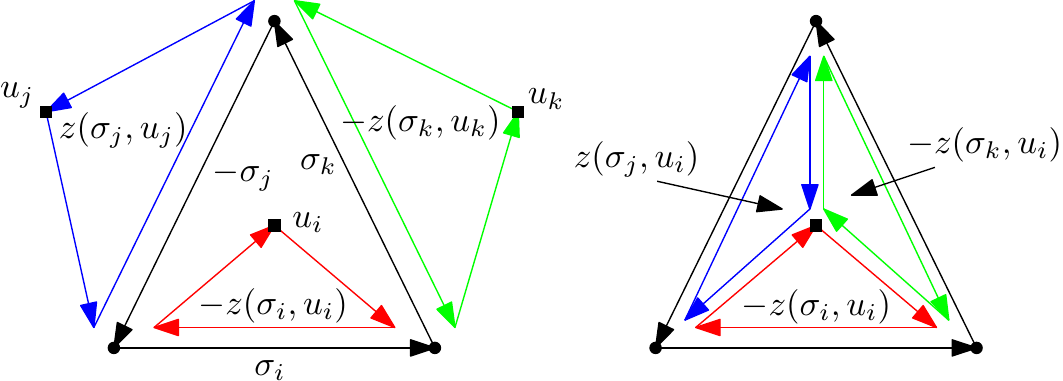}
    \caption{The geometric intuition behind the proof of
      Lemma~\ref{l:zero_homology_points}, for $k = 1$ and $u_{i_1}=
      u_1$ (cycles of same color are in the same homology class; the
      class on the right is trivial, because the edges cancel out in
      pairs).\label{f:homology_trivial}}
\end{center}
\end{figure}

\subsection{Subdivisions and orientations}
\label{ss:subdivisions}

Our next task is the construction of $D$ and $g$; however, we first mention
a few properties of subdivisions.

Let us consider a simplicial complex $K$ and a subdivision $S$ of
$K$. (So $K$ and $S$ are regarded as geometric simplicial
  complexes, and for every simplex $\eta$ of $S$ there is a simplex
  $\vartheta$ of $K$ such that $\eta \subseteq \vartheta$. In this case, we say
  that $\eta$ \emph{subdivides} $\vartheta$.)  There is a
canonical chain map $\rho\colon C_*(K) \to C_*(S)$ that induces an
isomorphism in homology. Intuitively, $\rho$ maps a simplex
$\vartheta$ of $K$ to a sum of simplices of $S$ of the same dimension
that subdivide $\vartheta$. However, we have to be careful about the
$\pm 1$ coefficients in the sum.

We work with the ordered simplicial homology, that is, we order
  the vertices of $K$ as well as the vertices of $S$.
  We want to define the mutual orientation $\Or(\eta, \vartheta)
    \in \{-1,1\}$ of a $j$-simplex $\vartheta$ of $K$ and a
    $j$-simplex~$\eta$ of $S$ that subdivides $\vartheta$. We set up
    $\Or(\eta, \vartheta)$ to be $1$ if the orientations of
    $\vartheta$ and $\eta$ agree, and $-1$ if they disagree; the
    orientation of each geometric simplex is computed relative to the
    order of its vertices in $K$ or $S$ (with respect to a fixed base
    of their common affine hull, say).
  Then we set
\begin{equation}
  \label{e:rho}
  \rho(\vartheta) = \sum\limits_{\eta} \Or(\eta,\vartheta) \eta
\end{equation}
where the sum is over all simplices $\eta$ in $S$ of the same
dimension as $\vartheta$ which subdivide $\vartheta$. Finally, we
extend $\rho$ to a chain map. It is routine to check that $\rho$
commutes with the boundary operator and that it induces an isomorphism
on homology.
It is also useful to describe $\rho$ in the specific case where $S$ is
a stellar subdivision of a complex $K$ consisting of a single
$k$-simplex. Here, we assume that $w_1, \dots, w_{j+1}$ are the
vertices of $K$ in this order (in $K$ as well as in $S$) and $a$ is
the apex of $S$, which comes last in the order on $S$. We also
consider $S$ as a subcomplex of the $(k+1)$-simplex on $w_1, \dots,
w_{k+1}, a$. And we use the notation $z(\vartheta, a) =
\partial(\vartheta \cup \{a\})$, analogously as previously in the case
of $k$-faces of $\Delta_s$.

\begin{lemma} 
  \label{l:stellar_rho}  
  In the setting above, let $\vartheta$ be the $k$-face of $K$. Then
  $\rho(\vartheta) = \vartheta + (-1)^k z(\vartheta, a)$.
\end{lemma}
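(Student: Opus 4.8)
The plan is to compute $\rho(\vartheta)$ directly from the definition~\eqref{e:rho}, by identifying the two top-dimensional simplices of the stellar subdivision $S$ that subdivide $\vartheta$ and determining the sign $\Or(\eta,\vartheta)$ for each. Write $\vartheta=\{w_1,\dots,w_{k+1}\}$ with the vertices in this order, and recall that $S = a\ast(\partial\vartheta)$, so the $k$-faces of $S$ are exactly the cones $a\ast\varsigma$ where $\varsigma$ ranges over the $(k-1)$-faces of $\vartheta$; there are $k+1$ of them. Explicitly, the $\ell$-th such face is $\eta_\ell := \{w_1,\dots,\widehat{w_\ell},\dots,w_{k+1},a\}$, and since $a$ comes last in the order on $S$, this is also the ordered simplex. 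The claim is then that, with the right signs,
\[
  \rho(\vartheta) \;=\; \sum_{\ell=1}^{k+1}\Or(\eta_\ell,\vartheta)\,\eta_\ell,
\]
and this sum equals $\vartheta + (-1)^k z(\vartheta,a)$ once we recall $z(\vartheta,a)=\partial(\vartheta\cup\{a\})$.

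First I would compute $z(\vartheta,a) = \partial(\{w_1,\dots,w_{k+1},a\})$ using the fixed sign convention for the boundary operator stated earlier in the paper. Since $a$ is the last vertex, the term obtained by deleting $a$ is $(-1)^{(k+2)+1}\vartheta = (-1)^{k+1}\vartheta$, and deleting $w_\ell$ gives $(-1)^{\ell+1}\eta_\ell$; hence $z(\vartheta,a) = (-1)^{k+1}\vartheta + \sum_{\ell=1}^{k+1}(-1)^{\ell+1}\eta_\ell$. Therefore $\vartheta + (-1)^k z(\vartheta,a) = \vartheta + (-1)^k(-1)^{k+1}\vartheta + \sum_{\ell=1}^{k+1}(-1)^{k}(-1)^{\ell+1}\eta_\ell = \sum_{\ell=1}^{k+1}(-1)^{k+\ell+1}\eta_\ell$, because $(-1)^k(-1)^{k+1} = -1$ cancels the leading $\vartheta$.

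So it remains to verify that $\Or(\eta_\ell,\vartheta) = (-1)^{k+\ell+1}$ for each $\ell$. This is the one genuinely technical point: I would fix an affine basis of the hyperplane $\aff(\vartheta)$ and compare the orientation induced by the ordered vertex list $(w_1,\dots,w_{k+1})$ of $\vartheta$ with that induced by the ordered list $(w_1,\dots,\widehat{w_\ell},\dots,w_{k+1},a)$ of $\eta_\ell$. Since $a$ lies in the interior of $\vartheta$, write $a$ in barycentric coordinates $a=\sum_i \lambda_i w_i$ with all $\lambda_i>0$; replacing $a$ by $w_\ell$ in the orientation determinant changes it by a positive factor $\lambda_\ell$, so $\Or(\eta_\ell,\vartheta)$ equals the sign of the permutation taking $(w_1,\dots,\widehat{w_\ell},\dots,w_{k+1},w_\ell)$ to $(w_1,\dots,w_{k+1})$. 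That permutation is the cycle moving $w_\ell$ from the last position back to position $\ell$, a product of $k+1-\ell$ transpositions, so its sign is $(-1)^{k+1-\ell} = (-1)^{k+\ell+1}$. Plugging this into the displayed sum above yields exactly $\rho(\vartheta) = \vartheta + (-1)^k z(\vartheta,a)$, completing the proof. The main obstacle is purely bookkeeping — getting this orientation sign right — and everything else is a direct substitution into the definitions of $\rho$, the boundary operator, and $z(\cdot,a)$.
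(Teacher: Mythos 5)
Your proof is correct and follows essentially the same route as the paper: both identify the $k$-simplices $\eta_\ell=\vartheta\cup\{a\}\setminus\{w_\ell\}$ of $S$, establish $\Or(\eta_\ell,\vartheta)=(-1)^{k+\ell+1}$, and expand $\partial(\vartheta\cup\{a\})$ to match. The only differences are cosmetic (you verify the identity by expanding $\vartheta+(-1)^k z(\vartheta,a)$ rather than $\rho(\vartheta)$, and you spell out the orientation sign via barycentric coordinates and a permutation count, which the paper leaves terse), and both are fine.
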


\begin{proof}
  Let $\eta_i := \vartheta \cup \{a\} \setminus \{w_i\}$ for $i \in [k+1]$.
  Then $\eta_i$ are all faces of $S$ subdividing $\vartheta$. We have
  $\Or(\eta_i,\vartheta) = (-1)^{i+k +1}$ as $\eta_i$ has the same orientation
  as $\vartheta$ with respect to a modified order of vertices of $\vartheta$
  obtained by replacing $w_i$ with $a$. Therefore $\rho(\vartheta) =
  \sum\limits_{i=1}^{k+1} (-1)^{i+k+1} \eta_i$. 
  On the other hand,
$$
z(\vartheta, a) = \partial(\vartheta \cup \{a\}) = \left(\sum\limits_{i=1}^{k+1}(-1)^{i+1}
\eta_i \right) +
(-1)^{k+3} \vartheta =(-1)^k(\rho(\vartheta)-\vartheta).
$$
\end{proof}
 
\subsection{Construction of $D$ and $g$}

The definition of $\varphi$, an in particular
Equation~\eqref{eq:defphi}, suggests to construct our subdivision $D$
of $\skelsim ks$ by simply replacing every $k$-face of $\skelsim ks$
by its stellar subdivision.  Let $a_i$ denote the new vertex
introduced when subdividing $\sigma_i$. We fix a linear order on vertices
of $D$ in such a way that we reuse the order of vertices that also belong to
$\skelsim ks$ and then the vertices $a_i$ follow in arbitrary order.

We define a simplicial map $\gsimp \colon D \to \skelsim kn$ by
putting $\gsimp(v) = v$ for every original vertex $v$ of $\skelsim
ks$, and $\gsimp(a_i) = u_i$ for $i \in [m]$. This $\gsimp$ induces a
map $g \colon |\skelsim ks| \to |\skelsim kn|$ on the geometric
realizations. Since the $u_i$'s are pairwise distinct, $g$ is an
embedding\footnote{We use the full strength of almost-embeddings when
  proving Lemma~\ref{l:chain_p} with the better bound on $n_0$.}, so
Condition~1 of Lemma~\ref{l:chain_p} holds.

In principle, we would like to derive Condition~2 of
Lemma~\ref{l:chain_p} by observing that $g$ `induces' a
chain map from $C_*(\skelsim ks)$ to $C_*( \skelsim kn)$ that
coincides with $\varphi$. Making this a formal statement is thorny
because $g$, as a continuous map, naturally induces a chain map
$g_\sharp$ on singular rather than simplicial chains. We can't use
directly $\gsimp$ either, since we are interested in a map from
$C_*(\skelsim ks)$ and not from~$C_*(D)$.

We handle this technicality as follows. We consider the chain map $\rho \colon
C_*(\skelsim ks) \to C_*(D)$ from \eqref{e:rho}. 
This map induces an isomorphism~$\rho_*$ in
homology. In addition $\varphi = (\gsimp)_\sharp \circ \rho$ where
$(\gsimp)_\sharp \colon C_*(D) \to C_*(\skelsim kn)$ denotes the
(simplicial) chain map induced by $\gsimp$. Indeed, all three maps are
the identity on simplices of dimension at most $k - 1$. For a $k$-simplex
$\sigma$, the map $\gsimp$ is an order preserving isomorphism when restricted
to the subdivision of $\sigma$ (in $D$). Therefore, the required equality
$\varphi(\sigma) = (\gsimp)_\sharp \circ \rho(\sigma)$ follows
from~\eqref{eq:defphi} and Lemma~\ref{l:stellar_rho}.

We thus have in homology
$$ f_* \circ \varphi_* = f_* \circ (\gsimp)_* \circ \rho_*  $$
and since $\rho_*$ is an isomorphism and $f_* \circ \varphi_*$ is
trivial by Lemma~\ref{l:zero_homology_points}, it follows that $f_* \circ
(\gsimp)_*$ is also trivial. Since $f_* \circ (\gsimp)_* = (f \circ
g)_*$ by Lemma~\ref{l:commutative_diagram}, $(f \circ g)_*$ is trivial
as well. This concludes the proof of Lemma~\ref{l:chain_p} with the
weaker bound.

\section{Proof of Lemma~\ref{l:chain_p}
}\label{s:strong}

We now prove Lemma~\ref{l:chain_p} with the bound claimed in the
statement, namely
$$ n_0 = \binom{s}{k}b(s-2k) + 2s - 2k + 1.$$
Let $k,b,s$ be fixed integers. We consider a $2k$-manifold $M$ with
$k$th $\Z_p$-Betti number $b$, a map $f:|\skelsim{k}{n}| \to M$, and we
assume that $n \ge n_0$.

The proof follows the same strategy as in Section~\ref{s:weak}: we
construct a chain map $\varphi\colon C_*(\skelsim ks) \to C_*(\skelsim
kn)$ such that the homology class $[(f_\sharp \circ \varphi)(\partial
\tau)]$ is trivial for all $(k+1)$-faces $\tau$ of $\Delta_s$, then
upgrade $\varphi$ to a continuous map $g\colon |\skelsim ks| \to
|\skelsim kn|$ with the desired properties.

When constructing $\varphi$, we refine the arguments of
Section~\ref{s:weak} to ``route'' each $k$-face using not only one,
but several vertices from $U$; this makes finding ``collisions''
easier, as we can use linear algebra arguments instead of the
pigeonhole principle. This comes at the cost that when upgrading $g$,
we must content ourselves with proving that it is an almost-embedding.
This is sufficient for our purpose and has an additional benefit: the
same group of vertices from $U$ may serve to route several $k$-faces
provided they pairwise intersect in $\skelsim sk$.

\subsection{Construction of $\varphi$}

We use the same notation regarding $v_1, \ldots, v_{n+1}$, $\simplex{n}$,
$\simplex{s}$, $U$, $m=\binom{s+1}{k+1}$ and $\sigma_1, \sigma_2,\ldots,
\sigma_m$ as in Section~\ref{s:weak}.

\paragraph{Multipoints and the map $\vv$.}

As we said we plan to route $k$-faces of $\simplex{s}$ through certain collections of
vertices from $U$ (weighted); we will call these collections multipoints. 
It is more convenient to work with them on the level of formal linear
combinations.
Let $C_0(U)$ denote the $\Z_p$-vector space of formal linear combinations of
vertices from $U$. A
\emph{multipoint} is an element of $C_0(U)$ whose 
 coefficients sum
to $1$ (in $\Z_p$, of course).
The multipoints form an affine subspace of $C_0(U)$ which we
denote by $\M$. The \emph{support}, $\sup(\mu)$, of a multipoint $\mu \in \M$ is the
set of vertices $v \in U$ with non-zero coefficient in $\mu$. We say
that two multipoints are \emph{disjoint} if their supports are
disjoint.

For any $k$-face $\sigma_i$ and any multipoint $\mu = \sum_{u \in U}\lambda_uu$ 
we define:
$$ z(\sigma_i, \mu):=\sum_{u \in \sup(\mu)} \lambda_u z(\sigma_i,
u) := \sum_{u \in \sup(\mu)} \lambda_u \partial(\sigma_i \cup \{u\}).$$
Now, we proceed as in Section~\ref{s:weak} but replace unused
points by multipoints of $\M$ and the cycles $z(\sigma_i, u)$ with
the cycles $z(\sigma_i, \mu)$. Since $\Z_p$ is a field, $H_k(M)^m$ is
a vector space and we can replace the sequences $\vv(u)$ of
Section~\ref{s:weak} by the linear map
$$\vv: \left\{\begin{array}{rcl} \M & \to & H_k(M)^m\\
    \mu & \mapsto & ([f_\sharp(z(\sigma_1,\mu))], [f_\sharp(z(\sigma_2,\mu))],\ldots,
    [f_\sharp(z(\sigma_m,\mu))])\end{array}\right.$$

\paragraph{Finding collisions.}

The following lemma takes advantage of the vector space structure of
$H_k(M)^m$ and the affine structure of $\M$ to find disjoint
multipoints $\mu_1, \mu_2, \ldots$ to route the $\sigma_i$'s more
effectively than by simple pigeonhole.

\begin{lemma}\label{l:odd}
  For any $r \ge 1$, any $\Z_p$-vector space $V$, and any affine map
  $\psi\colon \M \to V$, if $\card(U) \ge
  (\dim(\psi(\M))+1)(r-1)+1$ then $\M$ contains $r$ disjoint
  multipoints $\mu_1, \mu_2, \ldots, \mu_r$ such that $\psi(\mu_1) =
  \psi(\mu_2) = \cdots = \psi(\mu_r)$.
\end{lemma}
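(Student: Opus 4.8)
The plan is to argue by induction on $r$, treating the finite set $U$ and the affine map $\psi$ as variable (so that the ``recursion'' can be applied with $U$ shrunk). Set $d:=\dim(\psi(\M))$. Since $\M$ is, by definition, the affine subspace of $C_0(U)$ spanned by the points $\{u : u\in U\}$, and $\psi$ is affine, $\psi(\M)$ is the affine subspace of $V$ spanned by $\{\psi(u):u\in U\}$; in particular $d$ is finite, and among these points we may choose an affine basis, i.e.\ $d+1$ distinct vertices $G=\{u^{(0)},\dots,u^{(d)}\}\subseteq U$ whose images $\psi(u^{(0)}),\dots,\psi(u^{(d)})$ are affinely independent and affinely span $\psi(\M)$. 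The one property of $G$ we will use: for \emph{every} point $v_0\in\psi(\M)$ there are coefficients $c_u$ ($u\in G$) with $\sum_{u\in G}c_u=1$ and $v_0=\sum_{u\in G}c_u\,\psi(u)$, so that the multipoint $\mu:=\sum_{u\in G}c_u u$ lies in $\M$, has $\supp(\mu)\subseteq G$, and satisfies $\psi(\mu)=v_0$ by affineness of $\psi$.

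For the base case $r=1$ the hypothesis only asks $\card(U)\ge 1$, and any single vertex is a multipoint, so there is nothing to prove. Now let $r\ge 2$ and assume the statement for $r-1$ (for every choice of underlying vertex set and affine map). Put $U':=U\setminus G$, let $\M'$ be the set of multipoints supported on $U'$, and let $\psi':=\psi|_{\M'}$. Then
\[
\card(U')=\card(U)-(d+1)\ \ge\ (d+1)(r-1)+1-(d+1)=(d+1)(r-2)+1 .
\]
Since $\M'\subseteq\M$ we have $\psi'(\M')\subseteq\psi(\M)$, hence $\dim(\psi'(\M'))\le d$, and therefore $\card(U')\ge(\dim(\psi'(\M'))+1)(r-2)+1$, which is precisely the hypothesis needed to invoke the inductive statement for $U'$, $\psi'$ and $r-1$. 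This yields $r-1$ pairwise disjoint multipoints $\mu_2,\dots,\mu_r$, each supported inside $U'$, with $\psi(\mu_2)=\dots=\psi(\mu_r)=:v_0$.

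It remains to build $\mu_1$. Since $v_0\in\psi'(\M')\subseteq\psi(\M)$, the property of $G$ recorded in the first paragraph gives a multipoint $\mu_1$ with $\supp(\mu_1)\subseteq G$ and $\psi(\mu_1)=v_0$. As $G\cap U'=\emptyset$, the support of $\mu_1$ is disjoint from $\supp(\mu_2)\cup\dots\cup\supp(\mu_r)$, so $\mu_1,\dots,\mu_r$ are pairwise disjoint, and $\psi(\mu_1)=\psi(\mu_2)=\dots=\psi(\mu_r)=v_0$. This closes the induction.

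The only mildly delicate point is the bookkeeping of the parameter $d$: deleting the affine basis $G$ may strictly lower the dimension of the image of the remaining multipoints, but this can only \emph{decrease} the number of vertices required at the next stage, so the numerical hypothesis is preserved along the induction. Apart from that, the argument is elementary affine linear algebra over $\Z_p$, using only that $\psi(\M)$ is a finite‑dimensional affine space because $\M\subseteq C_0(U)$ is finite‑dimensional.
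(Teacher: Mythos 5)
Your proof is correct and takes essentially the same approach as the paper's: extract a subset of $U$ whose $\psi$-images affinely span $\psi(\M)$ (your affine basis $G$, the paper's minimal spanning set $I_r$), recurse on the complement, and use the spanning property to close the induction. The paper phrases its induction in terms of disjoint subsets of $U$ whose $\psi$-image affine hulls intersect and only converts to multipoints at the very end, whereas you carry the multipoints through the induction directly; the two are mechanically equivalent.
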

\begin{proof}
  Let us write $U = \{v_{s+2}, v_{s+3}, \ldots, v_{n+1}\}$ and
  $d=\dim(\psi(\M))$.  We first prove by induction on $r$ the
  following statement:
  \begin{quote}
    If $\card(U) \ge (d+1)(r-1)+1$ there exist $r$ pairwise disjoint
    subsets $I_1, I_2, \ldots,I_r \subseteq U$ whose image under
    $\psi$ have affine hulls with non-empty intersection.
  \end{quote}
  (This is, in a sense, a simple affine version of Tverberg's
  theorem.)  The statement is obvious for $r = 1$, so assume that $r
  \geq 2$ and that the statement holds for $r-1$. Let $A$ denote the
  affine hull of $\{\psi(v_{s+2}), \psi(v_{s+3}),\dots, \psi(v_{n+1})\}$
  and let $I_r$ denote a minimal cardinality subset of $U$ such that
  the affine hull of $\{\psi(v): v \in I_r\}$ equals $A$. Since $\dim
  A \le d$ the set $I_r$ has cardinality at most $d+1$. The cardinality
  of $U \setminus I_r$ is at least $(d+1)(r-2)+1$ so we can apply the
  induction hypothesis for $r-1$ to $U \setminus I_r$. We thus obtain
  $r-1$ disjoint subsets $I_1, I_2, \ldots, I_{r-1}$ whose images
  under $\psi$ have affine hulls with non-empty intersection. Since
  the affine hull of $\psi(U \setminus I_r)$ is contained in the
  affine hull of $\psi(I_r)$, the claim follows.

  Now, let $a \in V$ be a point common to the affine hulls of
  $\psi(I_1), \psi(I_2), \ldots,\psi(I_r)$. Writing $a$ as an affine
  combination in each of these spaces, we get
  $$ a = \sum_{u \in J_1}\lambda^{(1)}_u\psi(u) = \sum_{u \in
  J_2}\lambda^{(2)}_u\psi(u) = \cdots = \sum_{u \in
  J_r}\lambda^{(r)}_u\psi(u)$$
  where $J_j\subseteq I_j$ and $\sum_{u \in J_j}\lambda^{(j)}_u =1$ for any $j \in [r]$.  
  Setting $\mu_j = \sum_{u
  \in J_j} \lambda_u^{(j)} u$ finishes the proof.
\end{proof}

\paragraph{Computing the dimension of $\vv(\M)$.}

Having in mind to apply Lemma~\ref{l:odd} with $V = H_k(M)^m$ and
$\psi = \vv$, we now need to bound from above the dimension of
$\vv(\M)$. An obvious upper bound is $\dim H_k(M)^m$, which equals $bm
= b\binom{s+1}{k+1}$. A better bound can be obtained by an argument
analogous to the proof of Lemma~\ref{l:zero_homology_points}. We first
extend Claim~\ref{c:boundary_tau_points} to multipoints.
\begin{claim}
\label{c:boundary_tau_multipoints}
  Let $\tau$ be a $(k+1)$-face of $\simplex s$ and let $\mu \in \M$. Let
  $\sigma_{i_1}, \dots, \sigma_{i_{k+2}}$ be all the $k$-faces of $\tau$
  sorted lexicographically. Then
  \begin{equation}
 \label{e:boundary_tau_multipoints}
      \partial \tau =
       z(\sigma_{i_1}, \mu) - z(\sigma_{i_2}, \mu) + \cdots +
   (-1)^{k+1}z(\sigma_{i_{k+2}},\mu).
   \end{equation}
\end{claim}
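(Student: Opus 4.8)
The plan is to deduce the multipoint identity directly from the single-vertex case, Claim~\ref{c:boundary_tau_points}, by exploiting bilinearity of $z(\,\cdot\,,\,\cdot\,)$ in its second argument together with the normalization $\sum_u \lambda_u = 1$ that defines a multipoint. Concretely, write $\mu = \sum_{u \in \sup(\mu)} \lambda_u u$ with $\sum_{u \in \sup(\mu)} \lambda_u = 1$. By the definition of $z(\sigma_i,\mu)$ we have, for each $j$, $z(\sigma_{i_j},\mu) = \sum_{u \in \sup(\mu)} \lambda_u\, z(\sigma_{i_j},u)$.

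Next I would substitute this into the right-hand side of \eqref{e:boundary_tau_multipoints} and interchange the two (finite) sums:
\[
\sum_{j=1}^{k+2} (-1)^{j+1} z(\sigma_{i_j},\mu)
= \sum_{u \in \sup(\mu)} \lambda_u \left( \sum_{j=1}^{k+2} (-1)^{j+1} z(\sigma_{i_j},u) \right).
\]
By Claim~\ref{c:boundary_tau_points}, the inner sum equals $\partial\tau$ for every $u \in \sup(\mu)$ (note the sign pattern matches, the coefficient of $z(\sigma_{i_j},u)$ being $(-1)^{j+1}$ in both statements). Hence the expression becomes $\bigl(\sum_{u \in \sup(\mu)} \lambda_u\bigr)\,\partial\tau = \partial\tau$, using that $\mu$ is a multipoint. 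This is exactly \eqref{e:boundary_tau_multipoints}.

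There is no real obstacle here: the claim is purely formal, amounting to the observation that both sides of \eqref{e:boundary_tau_points} are affine in the point being plugged in, so an identity valid at every vertex of $U$ persists for every affine (multipoint) combination of vertices. The only points worth a word of care are that all sums are finite (so reordering is unproblematic) and that the normalization $\sum \lambda_u = 1$ is precisely what is needed to recover $\partial\tau$ rather than a scalar multiple of it; working over the field $\Z_p$ this is immediate. I would therefore keep the written proof to a couple of lines: cite Claim~\ref{c:boundary_tau_points}, expand $z(\sigma_{i_j},\mu)$ by definition, swap sums, and conclude with $\sum_u \lambda_u = 1$.
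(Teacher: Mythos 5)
Your proof is correct and is essentially the same as the paper's: both take the $\Z_p$-linear combination of the single-vertex identities \eqref{e:boundary_tau_points} with weights $\lambda_u$ and invoke $\sum_u \lambda_u = 1$ to recover $\partial\tau$ on the left-hand side. No differences in approach, only in how much detail is spelled out.
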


\begin{proof}
  By Claim~\ref{c:boundary_tau_points} we know that
  \eqref{e:boundary_tau_multipoints} is true for points. For a multipoint
  $\mu = \sum_{u \in U}\lambda_u u$, we get \eqref{e:boundary_tau_multipoints} as a linear combination of
  equations~\eqref{e:boundary_tau_points} for the points $u$ with the `weight' $\lambda_u$ (note that
  $\sum_{u \in U} \lambda_u = 1$; therefore the corresponding combination of
  the left-hand sides of~\eqref{e:boundary_tau_points} equals $\partial
  \tau$).
\end{proof}

\begin{lemma}
  \label{l:im_vk}
  $\dim (\vv(\M)) \le b\binom sk$.
\end{lemma}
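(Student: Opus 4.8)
The plan is to bound $\dim(\vv(\M))$ not by the trivial estimate $bm = b\binom{s+1}{k+1}$, but by finding a subspace $W \subseteq \M$ of small codimension on which $\vv$ vanishes. The key observation, made precise by Claim~\ref{c:boundary_tau_multipoints}, is that for every $(k+1)$-face $\tau$ of $\simplex s$, the cycle $\partial\tau$ is an alternating sum of the $z(\sigma_{i_j},\mu)$ over the $k+2$ facets of $\tau$, \emph{for every} multipoint $\mu$; since $\partial\tau$ is independent of $\mu$, subtracting the identity for $\mu$ and the identity for a fixed reference multipoint $\mu_0$ yields a linear relation among the $z(\sigma_i,\mu-\mu_0)$. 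Equivalently, on the difference space $\M - \mu_0$ (a genuine vector space), the $m$ coordinates $\mu \mapsto [f_\sharp(z(\sigma_i,\mu))]$ satisfy, for each $(k+1)$-face $\tau$, one linear relation among the $k+2$ coordinates indexed by the facets of $\tau$.

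Concretely, I would argue as follows. Fix any multipoint $\mu_0$ and regard $\vv$ (after translating by $\vv(\mu_0)$) as a linear map on the vector space $\M_0 := \M - \mu_0$. For each $i$, let $\zeta_i\colon \M_0 \to H_k(M)$ be the $i$th coordinate, $\zeta_i(\nu) = [f_\sharp(z(\sigma_i,\nu))]$. Claim~\ref{c:boundary_tau_multipoints}, applied to $\mu = \mu_0 + \nu$ and to $\mu_0$ and subtracted, gives for every $(k+1)$-face $\tau$ with facets $\sigma_{i_1},\dots,\sigma_{i_{k+2}}$ (in lex order) the identity $\sum_{j=1}^{k+2}(-1)^{j+1}\zeta_{i_j}(\nu) = 0$ for all $\nu \in \M_0$. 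This expresses, say, $\zeta_{i_{k+2}}$ as a linear combination of $\zeta_{i_1},\dots,\zeta_{i_{k+1}}$. Thus the image $\vv(\M)$ is contained in the span of those coordinate projections indexed by a set $S$ of $k$-faces that is ``closed'' in the sense that it contains no $(k+1)$-face's top facet beyond the others — i.e., $S$ can be taken to be all $k$-faces $\sigma_i = \{v_{j_0} < \cdots < v_{j_k}\}$ with $j_0 = 1$, since any $k$-face not containing $v_1$ appears as the lex-last facet of the $(k+1)$-face obtained by adding $v_1$. The number of such faces is $\binom{s}{k}$: choose the remaining $k$ vertices from $\{v_2,\dots,v_{s+1}\}$. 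Hence $\vv(\M)$ embeds into $H_k(M)^{\binom sk}$, giving $\dim(\vv(\M)) \le b\binom sk$.

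The step I expect to require the most care is the bookkeeping of which $k$-faces survive as a spanning set — i.e., verifying that every $\sigma_i$ not containing $v_1$ really is eliminated (it is the lex-maximal facet of $\sigma_i \cup \{v_1\}$, because prepending the smallest vertex $v_1$ makes the omitted-$v_1$ facet lexicographically last among the $k+2$ facets), and checking the signs in the elimination are consistent so no circularity arises. Once that combinatorial fact is pinned down, the homological input is exactly Claim~\ref{c:boundary_tau_multipoints} and the bound follows immediately; there is no further topology needed.
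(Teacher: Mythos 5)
Your proof is correct and follows essentially the same route as the paper: both use Claim~\ref{c:boundary_tau_multipoints} to express, for each $k$-face $\sigma_i$ not containing $v_1$, the coordinate $\vv_i$ as an affine combination of the coordinates $\vv_{i_j}$ indexed by the other facets of $\tau=\sigma_i\cup\{v_1\}$ (all of which contain $v_1$), and then count the $\binom sk$ surviving coordinates. The only cosmetic difference is that you explicitly pass to the difference space $\M-\mu_0$ to make $\vv$ genuinely linear, whereas the paper works directly with the affine map; this is a harmless tightening of the same argument.
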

\begin{proof}
  Let $\tau$ be a $(k+1)$-face of $\Delta_s$ and let $\sigma_{i_1},
  \dots, \sigma_{i_{k+2}}$ denote its $k$-faces. 
  For any multipoint $\mu$, Claim~\ref{c:boundary_tau_multipoints} implies  
  $$ [f_\sharp(\partial\tau)] = \sum_{j=1}^{k+2} (-1)^{j+1} [f_\sharp(z(\sigma_i,
  \mu))] = \sum_{j=1}^{k+2} (-1)^{j+1}\vv_{i_j}(\mu);$$ 
therefore
$$\qquad \vv_{i_{k+2}}(\mu) = (-1)^{k+1}[f_\sharp(\partial\tau)] +
\sum_{j=1}^{k+1} (-1)^{j+k+1}\vv_{i_j}(\mu).$$
  Each vector
  $\vv(\mu)$ is thus determined by the values of the $\vv_{j}(\mu)$'s
  where $\sigma_j$ contains the vertex $v_1$. Indeed, the vectors
  $[f_\sharp(\partial\tau)]$ are independent of $\mu$, and for any
  $\sigma_i$ not containing $v_1$ we can eliminate $\vv_i(\mu)$ by
  considering $\tau := \sigma_i \cup \{v_1\}$ (and setting $\sigma_{i_{k+2}} =
  \sigma_i$). For each of the
  $\binom{s}{k}$ faces $\sigma_j$ that contain $v_1$, the vector
  $\vv_{j}(\mu)$ takes values in $H_k(M)$ which has dimension at most
  $b$. It follows that $\dim \vv(\M) \leq b\binom{s}{k}$.
\end{proof}

\paragraph{Coloring graphs to reduce the number of multipoints used.}

We could now apply Lemma~\ref{l:odd} with $r=m$ to obtain one
multipoint per $k$-face, all pairwise disjoint, to proceed with our
``routing''. As mentioned above, however, we only need that $\varphi$
is an almost-embedding, so we can use the same multipoint for several
$k$-faces provided they pairwise intersect. Optimizing the number of
multipoints used reformulates as the following graph coloring
problem:

\begin{quote}
  Assign to each $k$-face $\sigma_i$ of $\Delta_s$ some color $c(i)
  \in \N$ such that $\card\{c(i): 1 \le i \le m\}$ is minimal and
  disjoint faces use distinct colors.
\end{quote}

\noindent
This question is classically known as Kneser's graph coloring
problem and an optimal solution uses $s-2k+1$ colors~\cite{lovasz78,
  Matousek:BorsukUlam-2003}. Let us spell out one such coloring
(proving its optimality is considerably more difficult, but we do not
need to know that it is optimal). For every $k$-face $\sigma_i$ we let
$\min \sigma_i$ denote the smallest index of a vertex in $\sigma_i$.
When $\min \sigma_i \leq s - 2k$ we set $c(i) = \min \sigma_i$,
otherwise we set $c(i) = s - 2k + 1$.  Observe that any $k$-face with
color $c \leq s-2k$ contains vertex $v_c$.  Moreover, the $k$-faces
with color $s - 2k +1$ consist of $k+1$ vertices each, all from a set
of $2k+1$ vertices. It follows that any two $k$-faces with the same
color have some vertex in common.

\paragraph{Defining $\varphi$.}

We are finally ready to define the chain map $\varphi\colon
C_*(\skelsim{k}{s}) \to C_*(\skelsim{k}{n})$. Recall that we assume
that $n \ge n_0 = (\binom{s}{k}b+1)(r-1) + s +1$. Using the bound of
Lemma~\ref{l:im_vk} we can apply Lemma~\ref{l:odd} with $r = s - 2k +
1$, obtaining $s-2k+1$ multipoints $\mu_1, \mu_2, \ldots, \mu_{s-2k+1}
\in \M$. We set $\varphi(\vartheta) = \vartheta$ for any face
$\vartheta$ of $\simplex{s}$ of dimension less than $k$. We then
``route'' each $k$-face $\sigma_i$ through the multipoint $\mu_{c(i)}$
by putting
\begin{equation}
  \label{e:varphi} 
  \hfill
  \varphi(\sigma_i):= \sigma_i + (-1)^k z(\sigma_i,\mu_{c(i)}),
\hfill
\end{equation}
where $c(i)$ is the color of $\sigma_i$ in the coloring of the Kneser
graph proposed above. We finally extend $\varphi$ linearly to
$C_*(\Delta_s)$.

We need the following analogue of Lemma~\ref{l:zero_homology_points}.

\begin{lemma}\label{l:zero_homology_multipoints}
The map  $\varphi$ is a chain map and $\bigl[f_\sharp\bigl(\varphi(\partial
  \tau)\bigr)\bigr]= 0$ for every $(k+1)$-face
  $\tau\in\simplex{s}$.
\end{lemma}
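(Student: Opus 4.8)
The plan is to mirror the proof of Lemma~\ref{l:zero_homology_points} from Section~\ref{s:weak}, replacing single apices $u_i$ by multipoints $\mu_{c(i)}$ and single-point cycles $z(\sigma_i,u)$ by multipoint cycles $z(\sigma_i,\mu)$, and substituting the pigeonhole-based equality $\vv(u_1)=\cdots=\vv(u_m)$ with the linear-algebra collision $\vv(\mu_1)=\cdots=\vv(\mu_{s-2k+1})$ coming from Lemma~\ref{l:odd}.

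First I would check that $\varphi$ is a chain map. On faces of dimension $\le k-1$ it is the identity, so it commutes with $\partial$ there. For a $k$-face $\sigma_i$, Equation~\eqref{e:varphi} gives $\varphi(\sigma_i)=\sigma_i+(-1)^k z(\sigma_i,\mu_{c(i)})$, and since $z(\sigma_i,\mu)=\sum_{u\in\sup(\mu)}\lambda_u\,\partial(\sigma_i\cup\{u\})$ is a sum of boundaries (with coefficients summing to $1$), we have $\partial z(\sigma_i,\mu_{c(i)})=0$, hence $\partial\varphi(\sigma_i)=\partial\sigma_i=\varphi(\partial\sigma_i)$ because $\varphi$ is the identity on $(k-1)$-chains. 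It follows that $\varphi$ is a chain map (there are no higher-dimensional faces in $\skelsim{k}{s}$ to worry about).

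Next I would compute $[f_\sharp(\varphi(\partial\tau))]$ for a $(k+1)$-face $\tau$ with $k$-faces $\sigma_{i_1},\dots,\sigma_{i_{k+2}}$ sorted lexicographically. Writing $\partial\tau=\sum_{j=1}^{k+2}(-1)^{k+j}\sigma_{i_j}$ and applying $\varphi$ termwise, one gets
\[
  f_\sharp(\varphi(\partial\tau)) = f_\sharp(\partial\tau) + \sum_{j=1}^{k+2}(-1)^{j}f_\sharp\bigl(z(\sigma_{i_j},\mu_{c(i_j)})\bigr).
\]
Passing to homology, $[f_\sharp(z(\sigma_{i_j},\mu_{c(i_j)}))]=\vv_{i_j}(\mu_{c(i_j)})$, and since all the multipoints $\mu_1,\dots,\mu_{s-2k+1}$ have the same image under $\vv$, this equals $\vv_{i_j}(\mu_1)=[f_\sharp(z(\sigma_{i_j},\mu_1))]$ regardless of $c(i_j)$. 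Hence
\[
  [f_\sharp(\varphi(\partial\tau))] = \Bigl[f_\sharp\Bigl(\partial\tau + \sum_{j=1}^{k+2}(-1)^{j}z(\sigma_{i_j},\mu_1)\Bigr)\Bigr],
\]
which is $[f_\sharp(0)]=0$ by Claim~\ref{c:boundary_tau_multipoints} (the multipoint version of Claim~\ref{c:boundary_tau_points}).

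The step requiring the most care is the bookkeeping of which multipoint is attached to which $k$-face: in Section~\ref{s:weak} every $\sigma_{i_j}$ used a distinct apex $u_{i_j}$, whereas here several $\sigma_{i_j}$'s may share a multipoint via the Kneser coloring. This is exactly why the argument goes through the common value $\vv(\mu_1)=\cdots=\vv(\mu_{s-2k+1})$ rather than through any per-face data: once we replace each $\mu_{c(i_j)}$ by $\mu_1$ at the level of homology classes, the faces-sharing-colors issue disappears and Claim~\ref{c:boundary_tau_multipoints} closes the computation. No other obstacle arises; the colorful routing only affects the (later) construction of $D$ and $g$, not this lemma.
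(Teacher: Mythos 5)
Your proof is correct and follows exactly the route the paper indicates: the authors omit the proof of Lemma~\ref{l:zero_homology_multipoints}, stating only that it is obtained from the proof of Lemma~\ref{l:zero_homology_points} by substituting multipoints for points and Claim~\ref{c:boundary_tau_multipoints} for Claim~\ref{c:boundary_tau_points}, and that is precisely what you carry out. Your observation that $\partial z(\sigma_i,\mu)=0$ because $z(\sigma_i,\mu)$ is a linear combination of boundaries, and your use of $\vv(\mu_1)=\cdots=\vv(\mu_{s-2k+1})$ to replace each $\mu_{c(i_j)}$ by $\mu_1$ at the level of homology classes, are both exactly as intended.
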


The proof of Lemma~\ref{l:zero_homology_multipoints} is very similar to the
proof of Lemma~\ref{l:zero_homology_points}; it just replaces points with
multipoints and Claim~\ref{c:boundary_tau_points} with
Claim~\ref{c:boundary_tau_multipoints}. We therefore omit the proof.
We next argue that $\varphi$ behaves like an almost embedding.

\begin{lemma}
  \label{l:chain_ae}
  For any two disjoint faces $\vartheta, \eta$ of $\skelsim{k}{s}$,
  the supports of $\varphi(\vartheta)$ and $\varphi(\eta)$ use disjoint
  sets of vertices.
\end{lemma}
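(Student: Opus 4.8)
The plan is to first pin down, for a single face $\vartheta$ of $\skelsim{k}{s}$, which vertices of $\skelsim{k}{n}$ can possibly occur in the chain $\varphi(\vartheta)$, and then run a short case analysis on $\dim\vartheta$ and $\dim\eta$. The key structural observation is this: if $\dim\vartheta\le k-1$ then $\varphi(\vartheta)=\vartheta$, so the vertices occurring in $\varphi(\vartheta)$ are exactly the vertices of $\vartheta$, all lying in $\{v_1,\dots,v_{s+1}\}$; and if $\dim\vartheta=k$, say $\vartheta=\sigma_i$, then by \eqref{e:varphi} together with the definition $z(\sigma_i,\mu_{c(i)})=\sum_{u\in\sup(\mu_{c(i)})}\lambda_u\,\partial(\sigma_i\cup\{u\})$, every simplex appearing in $\varphi(\sigma_i)$ is a $k$-face of some $(k+1)$-simplex $\sigma_i\cup\{u\}$ with $u\in\sup(\mu_{c(i)})$ (or $\sigma_i$ itself). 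Hence the set of vertices occurring in $\varphi(\sigma_i)$ is contained in $\sigma_i\cup\sup(\mu_{c(i)})$, and moreover this splits into the ``skeleton part'' $\sigma_i\subseteq\{v_1,\dots,v_{s+1}\}$ and the ``apex part'' $\sup(\mu_{c(i)})\subseteq U=\{v_{s+2},\dots,v_{n+1}\}$, which live in disjoint blocks of vertices.

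Now let $\vartheta,\eta$ be disjoint faces of $\skelsim{k}{s}$. If both have dimension $<k$, their vertex sets are disjoint by hypothesis and we are done. If exactly one of them, say $\eta$, has dimension $k$, then the vertices of $\varphi(\vartheta)$ are those of $\vartheta$, which avoid the vertices of $\eta$ by hypothesis and avoid $\sup(\mu_{c(\eta)})\subseteq U$ because $\vartheta\subseteq\{v_1,\dots,v_{s+1}\}$; so the two supports use disjoint vertex sets.

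The only case that uses the Kneser coloring is $\vartheta=\sigma_i$ and $\eta=\sigma_j$ both $k$-faces. Here I would check the three possible overlaps between $\sigma_i\cup\sup(\mu_{c(i)})$ and $\sigma_j\cup\sup(\mu_{c(j)})$: $\sigma_i\cap\sigma_j=\emptyset$ by hypothesis; $\sigma_i\cap\sup(\mu_{c(j)})=\emptyset$ and $\sigma_j\cap\sup(\mu_{c(i)})=\emptyset$ since the $\sigma$'s lie in $\{v_1,\dots,v_{s+1}\}$ while the multipoint supports lie in $U$; and finally $\sup(\mu_{c(i)})\cap\sup(\mu_{c(j)})=\emptyset$ because $\sigma_i$ and $\sigma_j$ are disjoint $k$-faces, so the coloring $c$ (which by the discussion preceding \eqref{e:varphi} assigns distinct colors to disjoint $k$-faces) gives $c(i)\neq c(j)$, and the multipoints $\mu_1,\dots,\mu_{s-2k+1}$ furnished by Lemma~\ref{l:odd} are pairwise disjoint. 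Combining these, the supports of $\varphi(\sigma_i)$ and $\varphi(\sigma_j)$ are disjoint, which finishes the proof. There is no serious obstacle; the only points needing care are verifying that the routing term $z(\sigma_i,\mu_{c(i)})$ introduces no vertices beyond $\sigma_i\cup\sup(\mu_{c(i)})$, and that the Kneser coloring property is being invoked exactly as needed (disjoint faces $\Rightarrow$ distinct colors $\Rightarrow$ disjoint multipoints).
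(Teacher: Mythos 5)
Your proof is correct and follows essentially the same three-way case analysis as the paper's own proof: $\varphi$ is the identity below dimension $k$, the routing of a $k$-face only introduces vertices from $\sigma_i\cup\sup(\mu_{c(i)})$ (with supports living in $U$, disjoint from $\simplex{s}$), and for two disjoint $k$-faces the Kneser coloring forces distinct colors and hence disjoint multipoint supports. Your write-up is a bit more explicit about which vertices can appear in $z(\sigma_i,\mu_{c(i)})$, but the underlying argument is the same.
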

\begin{proof}
  Since $\varphi$ is the identity on chains of dimension at most
  $(k-1)$, the statement follows if neither face has dimension $k$.
  For any $k$-chain $\sigma_i$, the support of $\varphi(\sigma_i)$
  uses only vertices from $\sigma_i$ and from the support of
  $\mu_{c(i)}$. Since each $\mu_{c(i)}$ has support in $U$, which
  contains no vertex of $\simplex s$, the statement also holds when
  exactly one of $\vartheta$ or $\eta$ has dimension $k$. When both
  $\vartheta$ and $\eta$ are $k$-faces, their disjointness implies
  that they use distinct $\mu_j$'s, and the statement follows from the
  fact that distinct $\mu_j$'s have disjoint supports.
\end{proof}

\subsection{Construction of $D$ and $g$}

We define $D$ and $g$ similarly as in Section~\ref{s:weak}, but the
switch from points to multipoints requires to replace stellar
subdivisions by a slightly more complicated decomposition.

\begin{figure}
  \begin{center}
    \includegraphics{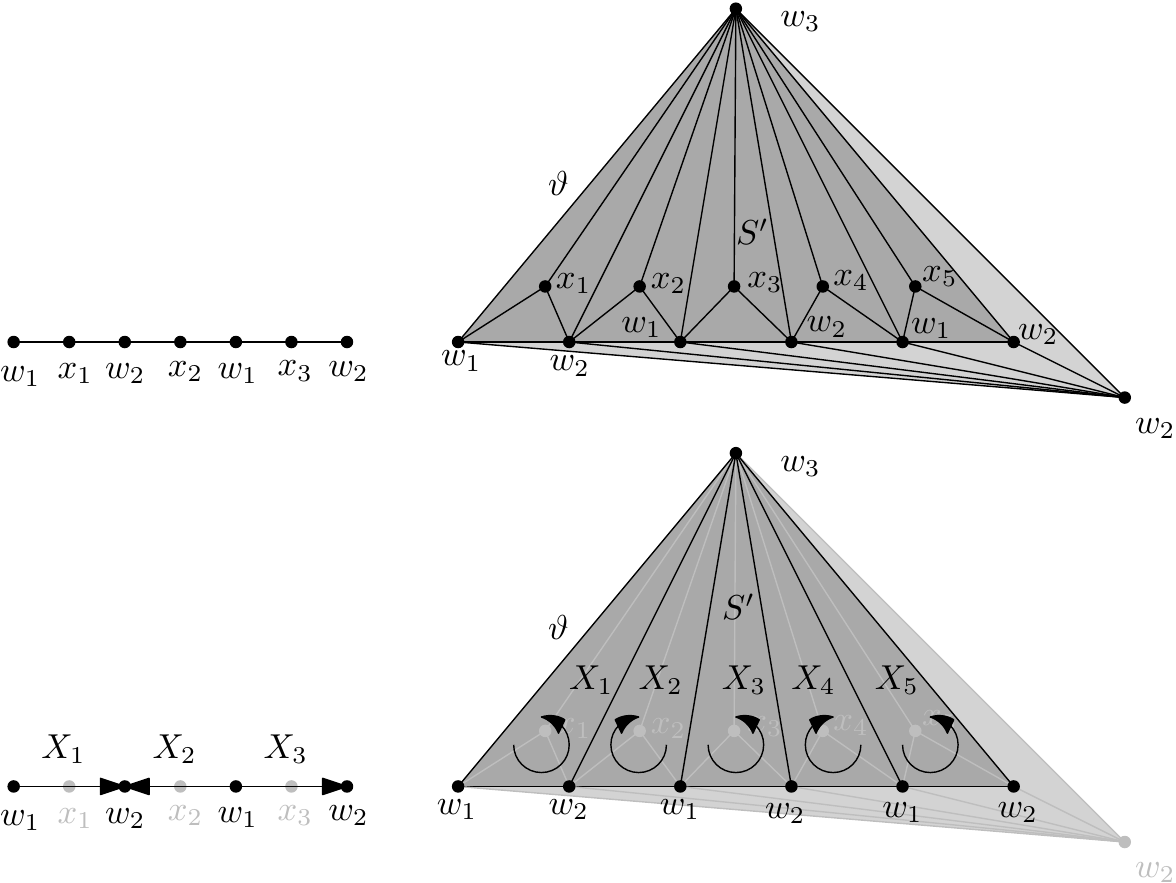}
  \end{center}
  \caption{Examples of subdivisions for $k=1$ and $\ell=3$ (left) and
    for $k=2$ and $\ell=5$ (right). The bottom pictures show the orientations
    of $|X_i|$ in the given ordering.
  \label{f:subdivide}}
\end{figure}

\paragraph{The subdivision $D$.}
We define $D$ so that it coincides with $\simplex s$ on the faces of
dimension at most $(k-1)$ and decomposes each face of dimension $k$
independently. The precise subdivision of a $k$-face $\sigma_i$
depends on the cardinality of the support of the multipoint $\mu_{c(i)}$
used to ``route'' $\sigma_i$ under $\varphi$, but the method is
generic and spelled out in the next lemma; refer to
Figure~\ref{f:subdivide}.

\begin{lemma}
  \label{l:subdivide}
  Let $k \geq 1$ and $\sigma = \{w_1, w_2, \ldots, w_{k+1}\}$ be a
  $k$-simplex. For any positive odd integer $\ell \ge 1$ there exist a
  subdivision $S$ of $\sigma$ in which no face of dimension $k-1$ or
  less is subdivided, and a labelling of the vertices of $S$ by
  $\{w_1, w_2, \dots, w_{k+1}, x_1, x_2, \dots, x_\ell\}$ (some labels may
  appear several times) satisfying the following properties.

  \begin{enumerate}
  \item Every vertex in $S$ corresponding to an original vertex $w_i$
    of $\sigma$ is labelled by $w_i$.
  \item No $k$-face of $S$ has its vertices labelled by $w_1, w_2,
    \ldots, w_{k+1}$,
  \item for every $j \in [\ell]$, the subdivision $S$ contains exactly one vertex
    labelled by $x_j$; this vertex appears in a copy $X_j$ of a
    stellar subdivision of a simplex labelled by $w_1, \ldots, w_{k+1}$ with the apex
    labelled $x_j$.
  \item Let us equip vertices of $S$ with a linear order which respects the
    order $w_1 \leq w_2 \leq \cdots \leq w_{k+1} \leq x_1 \leq \cdots \leq
    x_\ell$ of the labels. For each $j \in [\ell]$ considering $|X_j|$ as a simplex  
    in $|S| = |\sigma|$, such $|X_j|$ is oriented coherently with $|\sigma|$
    (in the given ordering) if and only if $j$ is odd.
    
  \end{enumerate}
\end{lemma}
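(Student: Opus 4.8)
The plan is to construct $S$ explicitly by first building a "comb-like" decomposition of a single $k$-simplex that has exactly $\ell$ apices, and then iterating the stellar-subdivision idea $\ell$ times along one edge. Concretely, I would fix the edge $w_1 w_2$ of $\sigma$ and place $\ell$ interior points $x_1, \dots, x_\ell$ on it (or, more robustly, on a line segment through the interior of $|\sigma|$ near that edge), ordered so that $x_1$ is closest to $w_1$. Slicing $|\sigma|$ by the $\ell$ hyperplanes spanned by $x_j$ and the opposite face $\{w_3,\dots,w_{k+1}\}$ (together with $w_2$, to keep everything a simplex) cuts $|\sigma|$ into $\ell+1$ pieces, each of which is combinatorially a $k$-simplex; I then take $X_j$ to be the piece whose vertex set is $\{w_1,\dots,\widehat{w_?},\dots,w_{k+1}\}$ with one $w_i$ replaced by $x_j$, i.e. a copy of the stellar subdivision with apex $x_j$, and cone off the remaining "slab" pieces appropriately so that $w_1,\dots,w_{k+1}$ never all appear together on a top face. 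This is most cleanly done by induction on $\ell$: for $\ell=1$ it is the ordinary stellar subdivision (Lemma~\ref{l:stellar_rho}'s setting), and the inductive step re-subdivides the one piece of $S_{\ell-2}$ that still carries a face labelled $w_1,\dots,w_{k+1}$ — but since $\ell$ runs only over odd integers, the step should go from $\ell-2$ to $\ell$, inserting two new apices $x_{\ell-1}, x_\ell$ at once so the parity bookkeeping in part~(4) stays clean.

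The key steps, in order, are: (i) give the explicit point configuration and verify that the slices produce a genuine simplicial subdivision in which no face of dimension $\le k-1$ of $\sigma$ is cut (this holds because all new vertices $x_j$ lie in the interior of $|\sigma|$, not on any proper face); (ii) assign the labels, setting every subdivision vertex lying on an original vertex of $\sigma$ to the corresponding $w_i$ (part~1) and the $j$-th interior vertex to $x_j$ (part~3, existence and uniqueness); (iii) check part~(2), that no top-dimensional face of $S$ has all of $w_1,\dots,w_{k+1}$ as labels — this is exactly the reason one stellarly subdivides rather than just adds points, and it follows because every $k$-face of $S$ either lies inside some $X_j$ (so contains the apex label $x_j$) or lies in a slab piece that by construction already has one $w_i$ replaced; (iv) orient everything and verify part~(4). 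For (iv) I would use the description of $\rho$ and orientations from Section~\ref{ss:subdivisions}: with the order $w_1 \le \cdots \le w_{k+1} \le x_1 \le \cdots \le x_\ell$, the simplex $X_j$ is obtained from $\sigma$ by replacing a single $w_i$ by $x_j$, and $\Or(X_j,\sigma)$ is $(-1)^{i_j + k + 1}$ where $i_j$ is the slot replaced, exactly as in the proof of Lemma~\ref{l:stellar_rho}; arranging the geometric slicing so that $i_j$ alternates with $j$ (e.g. the $j$-th slice replaces $w_1$ when $j$ is odd and $w_2$ when $j$ is even, or equivalently the $X_j$ alternate in orientation as one marches along the edge) yields the "odd $\iff$ coherently oriented" statement.

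The main obstacle I expect is (iv), matching the combinatorial orientation convention to the geometry: one has to be careful that "the pieces alternate in orientation as you sweep along the edge" is compatible with the chosen linear order on the labels, since the order forces the apex label $x_j$ to come last and thus fixes $\Or(X_j,\sigma)$ in terms of which $w$-slot is vacated, and that slot is dictated by the geometry of the slicing, not freely choosable face-by-face. The cleanest fix is probably to build the slicing and the order together — fix the order first, then choose the sweep direction and which opposite facet each cutting hyperplane contains so that the vacated slot has the right parity — and then part~(4) becomes a direct computation rather than a constraint to be satisfied after the fact. Everything else (parts 1--3, and that $S$ is an honest subdivision with $S$ agreeing with $\sigma$ below dimension $k$) is routine once the configuration is pinned down, and the restriction to odd $\ell$ is used only to make the parity statement in part~(4) self-consistent when the construction is set up to flip orientation at each successive apex.
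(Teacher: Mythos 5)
Your plan starts in the right neighbourhood---slice $\sigma$ along one edge, then repair the pieces---but the concrete construction has a type error at its centre and is missing the one trick that makes the paper's construction go through.

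The error: you define $X_j$ to be a \emph{slice} of the chopped-up simplex, i.e.\ a single $k$-simplex with one $w_i$ replaced by $x_j$, and then call it ``a copy of the stellar subdivision with apex $x_j$''. A stellar subdivision of a $k$-simplex $\vartheta$ is not one simplex; it is the cone $x_j \ast \partial\vartheta$, a complex of $k+1$ simplices with $x_j$ in the \emph{interior} of $|\vartheta|$. Condition~3 of the lemma really does require such a chamber around each $x_j$. With $x_j$ placed on (or near) the boundary edge $w_1w_2$ and each $X_j$ a single slab, you never produce this. Relatedly, once you see that each slab must be further stellar-subdivided, the labels on that slab must be $w_1,\dots,w_{k+1}$ before the subdivision, and your slicing does not give that: a slab between two cuts has two vertices on the $w_1w_2$-line and it is not clear which of $w_1,w_2$ either should receive. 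The missing idea is the paper's: label the interior vertices on the cut edge \emph{alternately} $w_1, w_2$, so that (because $\ell$ is odd) every slab acquires the full label set $w_1,\dots,w_{k+1}$, and then stellar-subdivide each slab with apex $x_j$. That alternation is also what makes the orientation parity in~(4) a computation rather than a constraint, since consecutive slabs are mirror images and the alternating labels flip one slot of the vertex order. Without it, your orientation argument in (iv) has nothing to compute with.

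Two further points. First, you note the boundary problem (a cut along the edge $w_1w_2$ would subdivide a $1$-face of $\sigma$, which is forbidden) but only gesture at a fix; the paper's fix is to first manufacture an \emph{inner} simplex $\sigma'$ (join of the facet opposite $w_2$ with a new interior vertex labelled $w_2$), do all the slicing and stellar subdivisions inside $\sigma'$, and then cone the boundary of $\sigma'$ to the original $w_2$ to fill out $\sigma$ without touching $\partial\sigma$. Second, your inductive fallback ($\ell-2 \to \ell$) cannot work as stated: once $S_{\ell-2}$ satisfies condition~(2) there is, by that very condition, no $k$-face labelled $w_1,\dots,w_{k+1}$ left to re-subdivide, so the inductive step has nowhere to go. The paper's construction is non-inductive for exactly this reason: it creates all $\ell$ slabs (each with the full label set) \emph{before} any stellar subdivision, rather than after.
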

\begin{proof}
  This proof is done in the language of geometric simplicial complexes (rather
  than abstract ones).
  
  The case $\ell=1$ can be done by a stellar subdivision and labelling
  the added apex $x_1$. The case $k=1$ is easy, as illustrated in
  Figure~\ref{f:subdivide} (left). We therefore assume that $k \ge 2$
  and build our subdivision and labelling in four steps:

\begin{itemize}
\item We start with the boundary of our simplex $\sigma$ where each
  vertex $w_i$ is labelled by itself. Let $\vartheta$ be the $(k-1)$-face
  of $\partial \sigma$ opposite vertex $w_2$, \emph{ie} labelled by
  $w_1,w_3,w_4,\cdots w_{k+1}$. We create a vertex in the interior of $\sigma$,   label it $w_2$,
  and construct a new simplex $\sigma'$ as the join of $\vartheta$
  and this new vertex; this is the dark simplex in
  Figure~\ref{f:subdivide} (right).

\item We then subdivide $\sigma'$ by considering $\ell - 1$ distinct
  hyperplanes passing through the vertices of $\sigma'$ labelled $w_3,
  w_4, \ldots, w_{k+1}$ and through an interior points of the edge of
  $\sigma'$ labelled $w_1, w_2$. These hyperplanes subdivide $\sigma'$
  into $\ell$ smaller simplices.
  We label the new interior vertices on the edge of
  $\sigma'$ labelled $w_1, w_2$ by alternatively, $w_1$ and $w_2$;
  since $\ell$ is odd we can do so in a way that every sub-edge is
  bounded by two vertices labelled $w_1, w_2$.

\item We operate a stellar subdivision of each of the $\ell$ smaller
  simplices subdividing $\sigma'$, and label the added
  apices $x_1, x_2, \ldots, x_\ell$. This way we obtain a subdivision $S'$ of
  $\sigma'$.

\item We finally consider each face $\eta$ of $S'$ subdividing $\partial
  \sigma'$ and other than $\vartheta$
  and add the simplex formed by $\eta$ and
  the (original) vertex $w_2$ of $\sigma$. These simplices, together with $S'$,
  form the desired subdivision $S$ of $\sigma$.
\end{itemize}  

\noindent
It follows from the construction that no face of $\partial
\sigma$ was subdivided.

Property~1 is enforced in the first step and preserved
throughout. We can ensure that Property~2 holds in the following way. 
First, we have that any $k$-simplex of $S'$ contains a vertex $x_j$ for some $j
\in [\ell]$. Next, if we consider a $k$-simplex of $S$ which is not in $S'$ it
is a join of a certain $(k-1)$-simplex $\eta$ of $S'$, with $\eta \subset \partial \sigma'$, and the vertex $w_2$ of $\sigma$. 
However, the only such $(k-1)$-simplex labelled by $w_1, w_3, w_4, \dots,
w_{k+1}$ is $\vartheta$, but the join of $\vartheta$ and $w_2$ does not belong
to $S$.

Properties~3 and~4 are enforced by the stellar
subdivisions of the third step and by alternating the labels $w_1$ and $w_2$ in
the second step. No other step creates, destroys or
modifies any simplex involving a vertex labelled $x_j$.
\end{proof}

Let $S$ be the subdivision of a simplex $\sigma$ from Lemma~\ref{l:subdivide}. Similarly as in the case of Lemma~\ref{l:stellar_rho}, we need to describe the
chain map $\rho\colon C_*(\sigma) \to C_*(S)$ defined by formula~\eqref{e:rho}.
Actually, only a partial information will be sufficient for us, focusing on
$k$-simplices of $X_j$.

Since for every $j \in [\ell]$, the apex of $X_j$ is the only vertex labelled by
$x_j$, we can use $x_j$ as the name for the apex. Let $\vartheta_j$ be the 
$k$-simplex on the vertices of $X_j$ except of $x_j$. Note that this simplex
does not belong to $S$. Following the usual pattern, we also denote
$z(\vartheta_j, x_j) := \partial(\vartheta_j \cup \{x_j\})$.

\begin{lemma} 
  \label{l:rho_S}  
  In the setting above, 
  \begin{equation}
    \label{e:rho_S}    
  \rho(\sigma) = \sum\limits_{j=1}^{\ell} (-1)^{j+1}\left(\vartheta_j + (-1)^k z(\vartheta_j, x_j) 
  \right) + \sum\limits_{\eta} \Or(\eta,\sigma) \eta
\end{equation}
where the second sum is over all $k$-simplices of $S$ which do not belong to
any $X_j$.
\end{lemma}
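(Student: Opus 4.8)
The plan is to unfold the definition \eqref{e:rho} of $\rho$ and split the resulting sum over the $k$-simplices of $S$ according to the piece $X_j$ they belong to. Since $\sigma$ is itself a $k$-simplex, every $k$-simplex $\eta$ of $S$ subdivides $\sigma$, so $\rho(\sigma)=\sum_\eta \Or(\eta,\sigma)\,\eta$ where $\eta$ ranges over \emph{all} $k$-simplices of $S$. First I would check that this index set decomposes as claimed. By Property~3 of Lemma~\ref{l:subdivide}, $X_j$ is a stellar subdivision of a $k$-simplex with vertex labels $w_1,\dots,w_{k+1}$ whose apex is the unique vertex of $S$ labelled $x_j$; hence every $k$-simplex of $X_j$ contains $x_j$ and has its remaining $k$ vertices labelled among $w_1,\dots,w_{k+1}$. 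As the $x_j$ are pairwise distinct, a $k$-simplex of $S$ can lie in at most one $X_j$. Therefore the $k$-simplices of $S$ split into those lying in $X_1,\dots,X_\ell$ and those lying in none of the $X_j$, and the latter group contributes precisely the term $\sum_\eta \Or(\eta,\sigma)\,\eta$ on the right-hand side of \eqref{e:rho_S}.

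Next I would evaluate, for each fixed $j$, the contribution $\sum_{\eta\subseteq X_j}\Or(\eta,\sigma)\,\eta$. The key point is a change of reference simplex in the mutual-orientation sign: the supports satisfy $|\eta|\subseteq|X_j|=|\vartheta_j|\subseteq|\sigma|$, so all of them span the same $k$-dimensional affine subspace, and since ``orientations agree'' is a transitive relation whose discrepancy signs multiply, one has $\Or(\eta,\sigma)=\Or(\eta,\vartheta_j)\cdot\Or(\vartheta_j,\sigma)$, where $\vartheta_j$ is oriented by its label order $w_1<\dots<w_{k+1}$ (which is both the order induced on it by $S$ and the order of $\sigma$). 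By Property~4 of Lemma~\ref{l:subdivide}, $|X_j|=|\vartheta_j|$ is oriented coherently with $|\sigma|$ exactly when $j$ is odd, i.e.\ $\Or(\vartheta_j,\sigma)=(-1)^{j+1}$. On the other hand, with the order $w_1<\dots<w_{k+1}<x_j$ on the $k+2$ vertices of $X_j$, the subdivision chain map of the stellar subdivision $X_j$ of $\vartheta_j$ is, by Lemma~\ref{l:stellar_rho}, $\sum_{\eta\subseteq X_j}\Or(\eta,\vartheta_j)\,\eta=\vartheta_j+(-1)^k z(\vartheta_j,x_j)$. Multiplying by $(-1)^{j+1}$ gives $\sum_{\eta\subseteq X_j}\Or(\eta,\sigma)\,\eta=(-1)^{j+1}\bigl(\vartheta_j+(-1)^k z(\vartheta_j,x_j)\bigr)$. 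Summing over $j$ and adding the contribution of the $k$-simplices outside all $X_j$ yields exactly \eqref{e:rho_S}.

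The step I expect to be the main obstacle is the sign bookkeeping in the second paragraph: one must apply Lemma~\ref{l:stellar_rho} with the vertex order in which $x_j$ comes last (guaranteed because the order on $S$ is required to respect $w_1\le\cdots\le w_{k+1}\le x_1\le\cdots\le x_\ell$), keep in mind that $\vartheta_j\notin S$ enters only formally — its two occurrences cancel inside $\vartheta_j+(-1)^k z(\vartheta_j,x_j)$, exactly as in the proof of Lemma~\ref{l:stellar_rho} — and use the transitivity identity $\Or(\eta,\sigma)=\Or(\eta,\vartheta_j)\Or(\vartheta_j,\sigma)$ consistently against the conventions of Property~4. The degenerate cases $\ell=1$ and $k=1$ need no separate treatment: the argument applies verbatim, and for $\ell=1$ it collapses to Lemma~\ref{l:stellar_rho}.
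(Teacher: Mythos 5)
Your proof is correct and takes essentially the same route as the paper: expand $\rho(\sigma)$ via \eqref{e:rho}, split the $k$-simplices of $S$ according to which $X_j$ (if any) they lie in, and on each $X_j$ apply Lemma~\ref{l:stellar_rho} with the sign correction $(-1)^{j+1}=\Or(\vartheta_j,\sigma)$ supplied by Property~4 of Lemma~\ref{l:subdivide}. You just spell out the multiplicativity $\Or(\eta,\sigma)=\Or(\eta,\vartheta_j)\Or(\vartheta_j,\sigma)$ and the cancellation of the formal term $\vartheta_j\notin S$, both of which the paper leaves implicit.
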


\begin{proof}
  We expand $\rho(\sigma)$ via~\eqref{e:rho}; however, we further shift the
  $k$-simplices in some of the $X_j$ to the first sum in~\eqref{e:rho_S}. This
  is done via Lemma~\ref{l:stellar_rho} on each of the $X_j$; the correction
  term $(-1)^{j+1}$ comes from Property~4 of Lemma~\ref{l:subdivide}.
\end{proof}

The subdivision $D$ of $\skelsim ks$ is now defined as follows. First,
we leave the $(k-1)$-skeleton untouched. Next for each $k$-simplex $\sigma_i$
we consider the multipoint $\mu = \mu_{c(i)} = \sum_{u \in U} \lambda_u
u$ (leaving 
the dependence on the index $i$ implicit in the affine combination). We
recall that $\lambda_u$ are elements of $\Z_p$; however, we temporarily
consider them as elements of $\Z$, in the interval $\{0, 1, \dots, p-1\}$. We
consider some $u' \in U$, which belongs to the support of $\mu$, and we set $\kappa_u := \lambda_u$ for any $u \in U
\setminus \{u'\}$ (as elements of $\Z$) whereas we set $\kappa_{u'} := 1 -
\sum_{u \in U \setminus \{u'\}} \lambda_u$. It follows that $\kappa_u \equiv
\lambda_u \pmod p$ for any $u \in U$ as $\sum_{u \in U} \lambda_u \equiv 1
\pmod p$ (they sum to $1$ as elements of $\Z_p$). Next, we set $\ell_i :=
\sum_{u \in U} |\kappa_u|$. It follows that $\ell_i$ is odd, and we set $S(i)$
to be the subdivision of $\sigma_i$ obtained from Lemma~\ref{l:subdivide} with
$\ell := \ell_i$. The final subdivision $D$ is obtained by subdividing each
$\sigma_i$ this way. For working with the chains, we need to specify a global
linear order on the vertices $D$. We pick an arbitrary such order that respects
the prescribed order on each $S(i)$.

According to this subdivision, we have a chain map $\rho  \colon C_*(\skelsim
ks) \to C_*(D)$ defined in Subsection~\ref{ss:subdivisions}. On faces of
dimension at most $(k-1)$ it is an identity; on $k$-faces, it is determined by
the formula from Lemma~\ref{l:rho_S}.

\paragraph{The simplicial map $\gsimp$.}

We now define a simplicial map $\gsimp\colon D \to \skelsim kn$. We
first set $\gsimp(v) = v$ for every vertex $v$ of $\simplex s$.
Next, we consider some $k$-face $\sigma_i = \{w_1, \dots, w_{k+1}\}$.  We
denote by $v_1, v_2 \ldots, v_{k+1}$ the
vertices on the boundary of $S(i)$, being understood that each
$v_j$ is labelled by $w_j$. We map each interior vertex of $S(i)$ labelled
with $w_j$ to $w_j$. It remains to map interior vertices of $S(i)$ labelled
$x_j$ for $j \in [\ell]$. Using the notation from the definition of $D$, we
consider the integers $\kappa_u$ for $u \in U$ (with respect to our
$\sigma_i$). If $\kappa_u > 0$, then we pick $\kappa_u$ vertices $x_j$ with $j$
odd and we map them to $u$. If $\kappa_u < 0$, which may happen only for $u =
u'$ (coming again from the definition of $D$), then we pick $-\kappa_u$
vertices $x_j$ with $j$ even and we map them to $u$. Of course, for two
distinct elements $u_1$ and $u_2$ from $U$ we pick distinct points $x_j$. The
parameter $\ell = \ell_i$ is set up exactly in such a way that we cover all
$x_j$. Now we need to know that $\rho$ and $\gsimp$ compose to $\varphi$ on the
level of chains.

\begin{lemma}
$(\gsimp)_{\sharp} \circ \rho = \varphi$.
\end{lemma}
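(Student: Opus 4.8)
The plan is to verify the identity $(\gsimp)_\sharp \circ \rho = \varphi$ by checking it on each generator of $C_*(\skelsim ks)$, splitting according to dimension. First I would dispose of the easy case: on any face $\vartheta$ of dimension at most $k-1$, all three maps $\rho$, $(\gsimp)_\sharp$ and $\varphi$ are the identity (since $D$ agrees with $\simplex s$ on the $(k-1)$-skeleton, $\gsimp$ fixes the vertices of $\simplex s$, and $\varphi$ was defined to be the identity there), so the composition $(\gsimp)_\sharp \circ \rho$ equals $\varphi$ on such $\vartheta$. The content of the lemma is therefore entirely in the case of a $k$-face $\sigma_i$, where we must show $(\gsimp)_\sharp(\rho(\sigma_i)) = \sigma_i + (-1)^k z(\sigma_i, \mu_{c(i)})$.

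For a fixed $k$-face $\sigma_i$, I would start from the formula for $\rho(\sigma_i)$ supplied by Lemma~\ref{l:rho_S}, namely $\rho(\sigma_i) = \sum_{j=1}^{\ell_i} (-1)^{j+1}\bigl(\vartheta_j + (-1)^k z(\vartheta_j, x_j)\bigr) + \sum_\eta \Or(\eta,\sigma_i)\eta$, where the last sum runs over $k$-simplices of $S(i)$ not lying in any $X_j$, and apply $(\gsimp)_\sharp$ termwise. The two key observations are: (a) every $k$-simplex appearing with a nonzero coefficient either has all its vertices labelled among $w_1,\dots,w_{k+1}$ — in which case $\gsimp$ collapses it (it is degenerate, hence maps to $0$ as a simplicial chain) — or else it contains exactly one vertex labelled $x_j$; (b) by Property~2 of Lemma~\ref{l:subdivide} no $k$-face of $S(i)$ is labelled purely by the $w$'s, so in fact \emph{every} $k$-simplex of $S(i)$ carries exactly one $x_j$-vertex. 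Thus the ``residual'' sum $\sum_\eta \Or(\eta,\sigma_i)\eta$ consists of simplices each containing some $x_j$, and when $\gsimp$ sends that $x_j$-vertex into $U$ while sending the remaining $k$ vertices into $\sigma_i$, the image simplex has $k$ vertices in $\simplex s$ and one in $U$; one then checks these images either are degenerate or cancel in pairs, contributing nothing net, so only the $X_j$ terms survive. The $\vartheta_j$ part: since $\vartheta_j$ has all vertices labelled by the $w$'s, $\gsimp(\vartheta_j)$ is degenerate and vanishes (more precisely its image is $\sigma_i$ read with multiplicity, but as a simplicial chain the collapsing makes it contribute $\sigma_i$ only through the careful bookkeeping — I would instead argue that $\sum_j (-1)^{j+1}\vartheta_j$ maps to $\sigma_i$ because the $\vartheta_j$'s, with these signs, form $\rho$ of $\sigma_i$ under the iterated refinement and $\gsimp$ is orientation-preserving on each). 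The $z(\vartheta_j,x_j)$ terms map, via the definition of $\gsimp$ on the $x_j$'s, to $z(\sigma_i, u)$ for the appropriate $u = \gsimp(x_j)$, with sign $(-1)^{j+1}$ matched against the parity convention (odd $j \mapsto$ positive $\kappa_u$ contribution, even $j \mapsto$ negative), so that $\sum_j (-1)^{j+1}(-1)^k z(\vartheta_j,x_j)$ pushes forward to $(-1)^k \sum_{u} \kappa_u z(\sigma_i,u) = (-1)^k z(\sigma_i,\mu_{c(i)})$, using $\kappa_u \equiv \lambda_u \pmod p$.

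Assembling these pieces gives $(\gsimp)_\sharp(\rho(\sigma_i)) = \sigma_i + (-1)^k z(\sigma_i,\mu_{c(i)}) = \varphi(\sigma_i)$, and extending linearly completes the proof. The main obstacle, and the step that needs the most care, is the sign and multiplicity bookkeeping in the $X_j$ contributions: one must confirm that the orientation correction $(-1)^{j+1}$ from Property~4 of Lemma~\ref{l:subdivide}, the internal $(-1)^k$ from the stellar formula of Lemma~\ref{l:stellar_rho}, the ordering of vertices in the global linear order on $D$, and the sign of $\gsimp$ restricted to each $X_j$ all combine to produce exactly the integer weight $\kappa_u$ (with its sign) in front of $z(\sigma_i,u)$ — and that after reduction mod $p$ this is the multipoint coefficient $\lambda_u$. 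A secondary subtlety is checking that the non-$X_j$ simplices of $S(i)$, which exist only when $k \ge 2$ (the ``extra'' simplices built in the first and fourth steps of the construction in Lemma~\ref{l:subdivide}), genuinely contribute zero after $\gsimp$; this follows because each such simplex, together with the one obtained by toggling the alternating $w_1/w_2$ label on its unique $w_1$-or-$w_2$ vertex adjacent to that edge, maps to the same degenerate or cancelling chain under $\gsimp$, but spelling this out requires tracking the geometry of the subdivision.
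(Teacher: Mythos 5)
Your outline matches the paper's (identity on the $(k-1)$-skeleton; apply Lemma~\ref{l:rho_S} to each $k$-face $\sigma_i$; kill the residual sum under $(\gsimp)_\sharp$; assemble the $X_j$-contributions into $\varphi(\sigma_i)$), and your sign bookkeeping for the $z(\vartheta_j,x_j)$ terms is correct. However, two of your central claims are factually reversed, and the proof would not survive a careful write-up as stated.

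First, you have misread Property~2 of Lemma~\ref{l:subdivide}: it forbids a $k$-face of $S(i)$ from carrying the $k+1$ \emph{distinct} labels $w_1,\dots,w_{k+1}$, not from being labelled only by $w$'s (with repetitions). In fact the residual sum $\sum_\eta \Or(\eta,\sigma_i)\eta$ consists exactly of the $k$-faces built in steps~1 and~4 of the construction; each is labelled \emph{only} by $w$'s, with two vertices carrying the label $w_2$, and \emph{none} contains any $x_j$-vertex. So the claim ``the residual sum consists of simplices each containing some $x_j$'' is false, and the ``cancel in pairs'' mechanism you invoke is the wrong one. The paper's argument is the one your observation~(a) already supplies and your erroneous~(b) then rules out: each residual $\eta$ has two equally-labelled vertices, so $\gsimp$ is not injective on it and $(\gsimp)_\sharp(\eta)=0$. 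Second, the assertion that ``$\gsimp(\vartheta_j)$ is degenerate and vanishes'' is the opposite of what happens: $\vartheta_j$ carries all $k+1$ \emph{distinct} labels $w_1,\dots,w_{k+1}$ (it is the base simplex of the stellar complex $X_j$, which is precisely why, by Property~2, it is \emph{not} a face of $S$), so $\gsimp$ maps its vertices bijectively and order-preservingly onto $\sigma_i$, giving $(\gsimp)_\sharp(\vartheta_j)=\sigma_i$ with coefficient $+1$. Since $\ell_i$ is odd, $\sum_{j=1}^{\ell_i}(-1)^{j+1}=1$ produces the $\sigma_i$ summand of $\varphi(\sigma_i)$. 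Your alternative justification (``the $\vartheta_j$'s, with these signs, form $\rho$ of $\sigma_i$ under the iterated refinement'') does not hold either: the $\vartheta_j$'s overlap geometrically inside $|\sigma_i|$ and are not the $k$-faces of any single subdivision. Once these two misconceptions are corrected, your argument collapses to the paper's.
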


\begin{proof}
All three maps are the identity on $\skelsim{k-1}s$ so let us focus
on the $k$-faces. Consider a $k$-face $\sigma_i$, the value $\rho(\sigma_i)$ is
given by the formula in Lemma~\ref{l:rho_S} with $S = S(i)$. However, for expressing
$(\gsimp)_{\sharp} \circ \rho(\sigma_i)$ we may ignore the second sum in
formula~\eqref{e:rho_S} since a $k$-simplex $\eta$ of $S$ that does not belong to
any $X_j$ contains two vertices with a same label by Lemma~\ref{l:subdivide},
which implies that $(\gsimp)_{\sharp}(\eta) = 0$.

Therefore
\begin{equation}
\label{e:g_rho}
(\gsimp)_{\sharp} \circ \rho(\sigma_i) = 
(\gsimp)_{\sharp} \Bigl(
\sum\limits_{j=1}^{\ell} (-1)^{j+1}\left(\vartheta_j + (-1)^k z(\vartheta_j, x_j) 
\right) \Bigr) = \sum\limits_{u \in U} \kappa_u(\sigma_i + (-1)^k z (\sigma_i,
u)).
\end{equation}

The last equality follows from the definition of $\gsimp$ considering that
$\gsimp$ preserves the prescribed linear orders on $D$ and $\skelsim kn$. The
sign $(-1)^{j+1}$ disappears as the vertices $x_j$ with $j$ even contribute to
$\kappa_u$ with the opposite sign. We know that $\kappa_u \mod p = \lambda_u$
and that $\sum_{u \in U} \kappa_u = 1$. Therefore the expression on the
right-hand side of~\eqref{e:g_rho} equals $\sigma_i + (-1)^k z(\sigma_i, \mu)$,
that is, $\varphi(\sigma_i)$ as required.
\end{proof}

\paragraph{The continuous map $g$.}

Since $D$ is a subdivision of $\skelsim ks$, we have $|\skelsim ks| =
|D|$ and the simplicial map $\gsimp\colon D \to \skelsim kn$ induces a
continuous map $g \colon |\skelsim ks| \to |\skelsim kn|$. All that
remains to do is check that $g$ satisfies the two conditions of
Lemma~\ref{l:chain_p}. Condition~1 follows from a direct translation of
Lemma~\ref{l:chain_ae}; note that in the definition of $\gsimp$ we map $x_j$ to
$u \in U$ only if $\kappa_u \neq 0$. Condition~2 can be verified by a computation
in the same way as in Section~\ref{s:weak}. Specifically, in homology we have
$$ f_* \circ \varphi_* = f_* \circ (\gsimp)_* \circ \rho_*$$
and we know that $f_* \circ \varphi_*$ is trivial on $\skelsim ks$ by
Lemma~\ref{l:zero_homology_multipoints}. As $\rho_*$ is an
isomorphism, this implies that $f_* \circ (\gsimp)_*$ is trivial.
Lemma~\ref{l:commutative_diagram} then implies that $(f \circ g)_*$ is
trivial. This concludes the proof of Lemma~\ref{l:chain_p}.

\bigskip

\begin{acknowledgement} U.W.\ learned about Conjecture~\ref{c:kuhnel}
  from Wolfgang K\"uhnel when attending the \emph{Mini Symposia on
    Discrete Geometry and Discrete Topology} at the \emph{Jahrestagung
    der Deutschen Mathematiker-Vereinigung} in M\"unchen in 2010. He
  would like to thank the organizers Frank Lutz and Achill Sch\"urmann
  for the invitation, and Prof.~K\"uhnel for stimulating discussions.
\end{acknowledgement}

\bibliographystyle{alpha}
\bibliography{kuhnel}

\clearpage
\appendix

\end{document}